\newtheorem{thm}{Theorem}[section]
\newtheorem{cor}[thm]{Corollary}
\newtheorem{lem}[thm]{Lemma}
\newtheorem{prop}[thm]{Proposition}
\theoremstyle{definition}
\newtheorem{defn}[thm]{Definition}
\newtheorem{rem}[thm]{Remark}
\newtheorem*{defn*}{Definition}
\newtheorem*{rems*}{Remarks}
\newtheorem*{rem*}{Remark}
\numberwithin{equation}{section}
\newcommand{\SC}{\mathrm{SC}}
\newcommand{\Eq}{\mathrm{E}}
\newcommand{\change}[1]{\textcolor{black}{#1}}
\begin{document}

\title[The geometry of the secant caustic] {The geometry of the secant caustic \linebreak of a planar curve}
\author{W. Domitrz, M. C. Romero Fuster, M. Zwierzy\'nski}
\address{
  W. Domitrz, \textsc{Warsaw University of Technology, Faculty of Mathematics and Information Science, ul. Koszykowa 75, 00-662 Warszawa, Poland}\newline
  \textit{E-mail address}: \texttt{domitrz@mini.pw.edu.pl}
\newline\newline
\indent M. C. Romero Fuster, \textsc{University of Valencia, Faculty of Mathematics, 46100 Burjassot, Valencia, Spain},\newline
  \textit{E-mail address}: \texttt{carmen.romero@uv.es}
\newline\newline
 \indent M. Zwierzy\'nski, \textsc{Warsaw University of Technology, Faculty of Mathematics and Information Science, ul. Koszykowa 75, 00-662 Warszawa, Poland}\newline
  \textit{E-mail address}: \texttt{zwierzynskim@mini.pw.edu.pl}
}

\thanks{The work of M. C. Romero Fuster was partially supported by DGCYT and FEDER grant no. MTM2015-64013-P}

\subjclass[2010]{53A04, 53A15, 58K05.}

\keywords{convex curve, rosette, secant caustic, secant map, singularity, Wigner caustic}

\begin{abstract}
The secant caustic of a planar curve $M$ is the image of the singular set of the secant map of $M$. We analyse the geometrical properties of the secant caustic of a planar curve, i.e. the number of branches of the secant caustic, the parity of the number of cusps and the number of inflexion points in each branch of this set. In particular, we investigate in detail some of the geometrical properties of the secant caustic of a rosette, i.e. a smooth regular oriented closed curve with non-vanishing curvature.
\end{abstract}

\maketitle

\section{Introduction}

Given a pair of closed smooth  curves $M, N$ respectively parameterized by $f, g: S^1 \rightarrow \mathbb R^n$, we can attach to it a smooth map $S_{M, N}: S^1 \times S^1 \rightarrow  \mathbb R^n$, known as the \textit {secant map} of  $(M, N)$. This is obtained by attaching to each couple $(s,t) \in S^1 \times S^1$   the end point of the vector $f(s)-g(t)$.  The study of the singular set  of the secant map provides relevant global information on the curves, from both the geometrical and topological viewpoint.  In fact, it is easy to see that $(s,t)$ is a singular point of the secant map if and only if the vectors $f'(s)$ and $g'(t)$ are parallel. The case of curves in $3$-space was analyzed by J. W. Bruce \cite{Bruce}, who proved that the secant map of a generic pair of space curves is a locally stable map from $\mathbb{R}^2$ to $\mathbb{R}^3$, having a cross-cap point at every pair of points with parallel tangents. It is not difficult to check that the singular set of the secant map coincides with the bitangency curves studied in \cite{Nuno-Romero}.

On the other hand, in the case of plane curves, it has been shown in \cite{RF-S} that a singular point is of a cusp type (or worse) if and only if  the curves $\alpha$ and $\beta$ have the same curvature and bend in opposite directions at the points $f(s)$ and $g(t)$. A description of the singularities of these maps, up to codimension $2$, together with their corresponding geometrical interpretation, is given in \cite{RF-S}. We quote the following results:

\begin{itemize}
\item[a)] The existence of a homotopically trivial connected component in the singular set of the secant map implies that both curves are non convex (in the sense that the curvature changes the sing along them).
\item[b)] For most  pairs  of closed curves $(M, N)$  (i.e. for an open and dense subset of $C^{\infty}(S^1,  \mathbb R^2) \times C^{\infty}(S^1,  \mathbb R^2)$ with  the Whitney $C^{\infty}$-topology), the  map  $S_{M, N}$ is a stable map from the torus to the plane. In such case, we said that  $(M, N)$ is a \textit{stable pair of curves}.
\item[c)] Given a stable pair of  $(M, N)$  with  respective Whitney  indices  $m$ and $n$,  the singular set of  $S_{M, N}$ has exactly $2 \mu _{n,m}$  toric curves of type $(\frac{n}{\mu _{n,m}},\frac{m}{\mu _{n,m}}) $, where $ \mu _{n,m}$ denotes the maximum common divisor of $m$ and $n$ and possibly some  more homotopically trivial toric curves.
\end{itemize}

The  image of the singular set (branch set) of a stable pair $(M, N)$ is a  (non necessarily connected) plane curve  with possible cusps and transverse self-intersections. We shall denominate it as the \textit{secant caustic} of the curve and denote it as  $\SC(M)$.
The aim of the present paper is to analyze the geometrical properties of the secant caustic of a generic closed plane curve, i.e. we shall consider  the particular case  $\alpha = \beta$. We observe that in this case the diagonal of $S^1\times S^1$ belongs to the singular set of the secant map (i.e. all the pairs $(s,s)$ are singular), but its image reduces to the origin of $\mathbb R^2$. This is a degenerate component of the secant map that does not provide any information and shall not be considered here.

We must  emphasize the connections between the secant caustic, the Wigner caustic and the Centre Symmetry Set. The Wigner caustic was introduced by M. Berry in 1977 \cite{Berry} (\change{the first time it was called the Wigner caustic in \cite{Ozorio-Hannay}}) and the Centre Symmetry Set was introduced by S. Janeczko in 1996 \cite{J1}. From a geometrical viewpoint,  the Wigner caustic of a plane curve $M$ is the locus of the midpoints of all the chords connecting couples of points  with parallel tangent lines on $M$. The Centre Symmetry Set is the envelope of these chords. The local geometrical properties of the Wigner caustic and the Centre Symmetry Set have been analyzed in \cite{Ozorio-Hannay, Craizer, CDR1, CDR2, DJRR1, DMR1, DR1, DRR1, D-Z2, D-Z3, GH1, GWZ1, GZ, J1, S1, S2}. The global properties of the Wigner caustic of  closed planar curves were studied in \cite{D-Z, D-Z3}.  For this purpose, in \cite{D-Z} it was introduced an algorithm (``glueing schemes'')  encoding the information relative to the connection between the  different smooth branches of the Wigner caustic was introduced. This algorithm allows us to determine the number of smooth branches, the rotation number, the number of inflexion points and the parity of the number of cusps on each one of these branches.  In the present paper (Section 3), we adapt this algorithm to the analysis of the secant maps of  closed plane curves and as a result we obtain several global consequences regarding the behaviour of the branch set of these maps. In particular we point out the following results:

\begin{itemize}
\item The numbers of cusps and of inflexion points of the secant caustic of a generic closed regular plane curve are even.

\item Let $ a, b$ be a parallel pair of a generic closed regular plane curve $M$. Then $a-b$ and $b-a$ are inflexion points of the secant caustic of $M$ if and only if one of the points $a$ or $b$ is an inflexion point of $M$.

\item  Given a regular curve $M$ and a parallel pair $a,b$ of $ M$, the secant caustic $\SC(M)$  of $M$ has a singularity at the point $a-b$ if and only if $M$ is curved in the same side at $a$ and $b$ and  $|\kappa_{ M}(a)|=|\kappa_{ M}(b)|$.

\item The secant caustic of a regular  closed curve $M$ passes through the origin at the inflexion points of $M$.
\end{itemize}

\change{The secant caustic of a regular curve is equivariant under affine transformations. Hence, the number of singular points and the number of inflexion points of the secant caustic are affine invariants. The techniques used in the paper are based on the Euclidean geometry but the Euclidean metric on the plane and  a parametrization of a curve are only technical tools that allow us to apply the methods of differential geometry and singularity theory. Main results do not depend on the parametrization neither the Euclidean metric.}


\section{The geometry of the secant caustic}

Let $M$ be a smooth parametrizable curve on the affine plane $\mathbb{R}^2$, i.e. the image of the $C^{\infty}$ smooth map from an interval to $\mathbb{R}^2$. A smooth curve is \textit{closed} if it is the image of a $C^{\infty}$ smooth map from $S^1$ to $\mathbb{R}^2$. A point of a curve is called \textit{regular} if the velocity of the curve does not vanish at this point -- otherwise it is called \textit{singular}. If all points of a curve are regular, the curve is called \textit{regular}. A regular curve is said to be \textit{simple} if it has no self-intersection points. A regular simple closed curve is \textit{convex} provided its curvature does not vanish, \change{i.e. if the order of contant of the curve with its tangent lines at any point is one. It is obvious that the order of contact of the curve with its tangent line is affine invariant}. \change{A \textit{rotation number} of a regular curve is a rotation number of its velocity vector field.}

\change{Throughout the article we will assume that the considered curves are regularly parametrizable.}

\change{Let $M$ be a regular curve or the union of two regular curves.}

\begin{defn}\label{parallelpair}
\change{A~pair of points $a,b\in M$, where $a\neq b$, is called a \textit{parallel pair} if the tangent line to $M$ at $a$ is parallel to the tangent line to $M$ at $b$.}
\end{defn}

\begin{defn}\label{chord}
The \textit{chord} connecting  a pair of points $a,b\in M$, is the line:
\begin{align*}
l(a,b)=\left\{\lambda a+(1-\lambda)b\ \big|\ \lambda\in\mathbb{R}\right\}.
\end{align*}
\end{defn}

\begin{defn}\label{equidistantSet}
\change{For a fixed value of $\lambda\in\mathbb{R}$, an \textit{affine $\lambda$-equidistant} of $M$ is the following set:}
\begin{align*}
\Eq_{\lambda}(M)=\textrm{cl}\left\{\lambda a+(1-\lambda)b\ \big|\ a,b \text{ is a parallel pair of } M\right\},
\end{align*}
where $\mathrm{cl}X$ is the closure of $X$. In particular, the set $\Eq_{\frac{1}{2}}(M)$ will be called the \textit{Wigner caustic} of $M$.
\end{defn}

\begin{defn}
The \textit{secant caustic} of $M$, is the following set:
\begin{align*}
\SC(M)=\textrm{cl}\left\{a-b\ \big|\ a,b \text{ is a parallel pair of } M\right\}.
\end{align*}
\end{defn}

See Fig. \ref{PictureSCexample1} for examples of secant caustics.

\begin{rem}
The closure in the previous definition is needed to include the origin $\bold{0}$ of $\mathbb{R}^2$ in the secant caustic of $M$ as a limit of the differences $a-b$ of parallel pairs of points $a,b$ approaching an inflexion point of $M$ from different sides.
\end{rem}

\begin{rem}
Let $\displaystyle\omega=\sum_{i=1}^{n}\mathrm{d}p_i\wedge \mathrm{d}q_i$ be the canonical symplectic form on $\mathbb{R}^{2n}$. The map $\flat: T\mathbb{R}^{2n}\ni v\mapsto \omega(v, \cdot)\in T^\ast\mathbb{R}^{2n}$ is an isomorphism between the bundles $T\mathbb{R}^{2n}$ and $T^\ast\mathbb{R}^{2n}$. Let $\alpha$ be the canonical Liouville $1$-form on $T^\ast\mathbb{R}^{2n}$. Then $\displaystyle\dot{\omega}=\flat^\ast\mathrm{d}\alpha=\sum_{i=1}^{n}\mathrm{d}\dot{p}_i\wedge \mathrm{d}q_i+\mathrm{d}p_i\wedge\mathrm{d}\dot{q}_i$ is a symplectic form on $T\mathbb{R}^{2n}$. 
It is easy to see that the map $\Psi:\mathbb{R}^{2n}\times\mathbb{R}^{2n}\to T\mathbb{R}^2=\mathbb{R}^{2n}\times\mathbb{R}^{2n},$
\begin{align*}
\Psi(p^+, q^+, p^-, q^-)=\left(\frac{p^++p^-}{2}, \frac{q^++q^-}{2}, p^+-p^-, q^+-q^-\right)
\end{align*}
takes the canonical symplectic form $\pi_+^\ast\omega-\pi_-^\ast\omega$ on the product $\mathbb{R}^{2n}\times\mathbb{R}^{2n}$ to the canonical symplectic form $\dot{\omega}$ on $T\mathbb{R}^{2n}$, where $\pi_+, \pi_-:\mathbb{R}^{2n}\times\mathbb{R}^{2n}\to\mathbb{R}^{2n}$ are the projections on the first and on the second component, respectively. Let $L$ be a Lagrangian submanifold of $(R^{2n}, \omega)$, then $\Psi(L\times L)$ is a Lagrangian submanifold of $(T\mathbb{R}^{2n}, \dot{\omega})$. Let $\pi_1, \pi_2:T\mathbb{R}^{2n}=\mathbb{R}^{2n}\times\mathbb{R}^{2n}\to\mathbb{R}^{2n}$ be the projections on the first and on the second component, respectively. Then $\pi_1$ and $\pi_2$ define Lagrangian fibre bundles with the symplectic structure $\dot{\omega}$. Let us notice that the caustic of the Lagrangian map $\pi_1\circ\Psi\big|_{L\times L}$ is the Wigner caustic \cite{CDR1, DMR1, DR1}. On the other hand the Lagrangian map  $\pi_2\circ\Psi\big|_{L\times L}$ is the secant map of $L$. Therefore the set of singular values of the secant map coincides with the secant caustic. \change{Finally, observe that any smooth regular planar curve is an immersed Lagrangian submanifold of $(R^2, \omega)$.}
\end{rem}

\begin{figure}[h]
\centering
\includegraphics[scale=0.4]{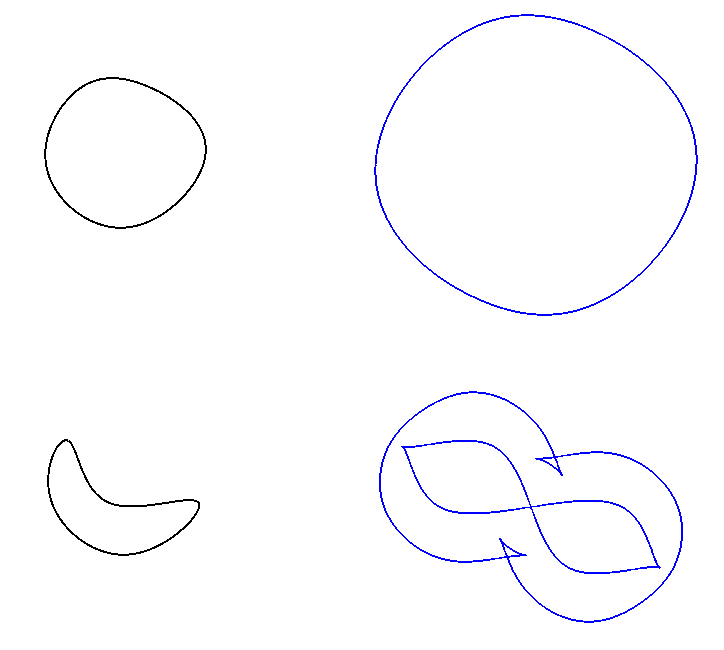}
\caption{Curves (on the left) and the secant caustic of them (on the right)}
\label{PictureSCexample1}
\end{figure}

\begin{defn}
\change{Let $f$ be a regular parameterization of a curve $M$. We say that $M$ is \textit{parameterized in the same direction} at points $f(s_1)$ and $f(s_2)$ if $f'(s_1)=\alpha f'(s_2)$, where $\alpha>0$.}
\end{defn}

\begin{defn}\label{DefNonDegInf}
A point $p$ of a smooth regular curve $M$ is called an \textit{inflexion point} of $M$ if the curvature of $M$ changes sign at $p$, and if the curvature vanishes at the point $p$ but does not change sign, then the point $p$ is called an \textit{undulation point} of $M$. An inflexion point $f(s_0)$ of $M$ parameterized by $f$ is \textit{non-degenerate} if $\det\big(f'(s_0),f'''(s_0)\big)\neq 0$.
\end{defn}

\begin{defn}\label{DefCusp}
Let $I,J\subset\mathbb{R}$ be open intervals such that $0\in I$, and $U, V\subset\mathbb{R}^2$ be open sets. A singular point $f(s_0)$, where $s_0\in J$, of a curve $f\in C^{\infty}(J, U)$, is called a \textit{cusp} if there exist diffeomorphisms $\varphi: I\to J$ and $\Phi: U\to V$ such that $\varphi(0)=s_0$ and
$$\left(\Phi\circ f\circ\varphi\right)(t)=\big(t^2,t^3\big).$$
\end{defn}

Let us denote by $\kappa_{ M}(p)$ the signed curvature of $ M$ at $p$. 

\change{Let $a,b$ be a parallel pair of $M$ and let $\kappa_M(b)\neq 0$. Let $f:(s_0,s_1)\to\mathbb{R}^2$, $g:(t_0,t_1)\to\mathbb{R}^2$ be local arc length parameterizations of $M$ nearby the points $a,b$ such that these parameterizations are in the same direction. Then there exists a function $t:(s_0,s_1)\to(t_0,t_1)$ such that
\begin{align}\label{ParallelPropertyLemma212}
f'(s)=g'(t(s)).
\end{align}
By the implicit function theorem the function $t$ is smooth and 
\begin{align}
\displaystyle t'(s)=\frac{\kappa_{M}(f(s))}{\kappa_{M}(g(t(s)))}.
\end{align}
Then a \textit{local natural parameterization of the secant caustic} is given in the following way
\begin{align}
\label{SCParamLemma}
\gamma_{\mathcal{SC}}(s)=f(s)-g(t(s)).
\end{align}
Whenever we will talk about singular points of the secant caustic we will understand these points as the singular points of the parameterization given by \eqref{SCParamLemma}.}

\begin{lem}\label{LemParallelCurvature}
Let $ M$ be a closed smooth regular curve. Let $a,b$ be a parallel pair of $ M$, such that $M$ is arc length parameterized at $a$ and $b$ in the same direction and $\kappa_{ M}(b)\neq 0$. Let $p=a-b$. Then 
\begin{enumerate}[(i)]
\item if $p$ is regular, then the tangent line of $\SC( M)$ at $p$ is parallel to the tangent lines of $ M$ at $a$ and $b$, $\kappa_{M}(a)\neq \kappa_{M}(b)$, and the curvature of $\SC( M)$ at $p$ is equal to
\begin{align}
\kappa_{\SC( M)}(p)=\frac{\kappa_{ M}(a)|\kappa_{ M}(b)|}{|\kappa_{ M}(a)-\kappa_{ M}(b)|}.
\end{align}
\item if $p$ is singular, then $\kappa_{M}(a)=\kappa_{M}(b)$ and the point $p$ is a cusp if and only if $\kappa_{M}'(a)\neq\kappa_M'(b)$, where $'$ denotes the derivative with respect to the arc length parameter.
\end{enumerate}
\end{lem}
\begin{proof}

\change{By direct calculations using the natural parameterization of the secant caustic \eqref{SCParamLemma} we get (i). Theorem 3 in \cite{RF-S} implies (ii).}
\end{proof}

\begin{rem}
\change{Under the assumptions of Lemma \ref{LemParallelCurvature}, by direct calculations the ratio of curvatures $\displaystyle\frac{\kappa_{M}(a)}{\kappa_{M}(b)}$, where $a,b$ is a parallel pair, is an affine invariant. Let $f$ be the arc length parameterization  of $M$ such that $f(s),f(t(s))$ are parallel pairs, where $t(s)$ is defined by (\ref{ParallelPropertyLemma212}). Then it is obvious that the following property
\begin{equation}\label{diff_ratio}
\dfrac{\mathrm{d}}{\mathrm{d}s}\left(\dfrac{\kappa_M(f(s))}{\kappa_M(f(t(s)))}\right)\ne 0
\end{equation}
does not depend on the parameterization of $M$. But on the other hand, 
\begin{align}
\label{EqDiffRatioCurv}
\dfrac{\mathrm{d}}{\mathrm{d}s}\left(\dfrac{\kappa_M(f(s))}{\kappa_M(f(t(s)))}\right)=\dfrac{\kappa_M'(f(s))\kappa_M^2(f(t(s)))-\kappa_M'(f(t(s)))\kappa_M^2(t(s))}{\kappa_M^3(f(t(s)))},
\end{align}
where $'$ denotes the derivative with respect to the arc length parameterization. Thus, by \eqref{EqDiffRatioCurv}, a parallel pair $f(s_0), f(t(s_0))$, such that $\kappa_{M}(f(s_0))=\kappa_{M}(f(t(s_0))$ and 
\begin{equation}\label{diff_ratio}
\kappa_{M}'(f(s_0))\neq \kappa_{M}'(f(t(s_0)))
\end{equation}
 is  affine equivariant.}
\end{rem}

Let $C^{\infty}(S^1,\mathbb{R}^2)$ denote the topological space  of $C^{\infty}$  smooth maps from $S^1$ to $\mathbb R^2$ with the Whitney $C^{\infty}$ topology.
Similarly like in \cite{D-Z} we will find a generic subset $G$ of  $C^{\infty}(S^1,\mathbb{R}^2)$ such that for any curve $f$ in $G$ the secant caustic of $f$ is the finite union of smooth curves with at most cusp singularities. Wherever in the paper we say the curve $f$ is generic we mean that $f$ belongs to $G$ i. e.  it satisfies assumptions (i)-(v) of Theorem \ref{ThmGenericCS}. 

\begin{thm}\label{ThmGenericCS}
\change{Let $G$ be the subset of $C^{\infty}(S^1,\mathbb{R}^2)$ such that any $f\in G$ has the following properties.
\begin{enumerate}[(i)]
\item The curve $f$ is a regular curve.
\item The curve $f$ has only non-degenerate inflexion points and it has no undulation points.
\item The curve $f$ has only transversal self crossings.
\item If $f(s_1), f(s_2)$ is a parallel pair of $f$, then at least one of $f(s_1)$, $f(s_2)$ is not an inflexion point.
\item If $f$ is parameterized in the same direction at the points $f(s_1)$ and $f(s_2)$, and $\kappa_f(s_1)=\kappa_f(s_2)$, then $\kappa'_f(s_1)\neq\kappa'_f(s_2)$, where $\kappa_f'$ denotes the derivative of the curvature with respect to the arc length parameter.
\end{enumerate}
Then the set $G$ is a generic subset of the topological space $C^{\infty}(S^1,\mathbb{R}^2)$. Furthermore, the secant caustic of $f\in G$ is the finite union of smooth curves with at most cusp singularities.}
\end{thm}
\begin{proof}
\change{It is sufficient to justify that each of the properties is generic. We will use the Thom Transversality Theorems -- the classic one and one for multijets (see Theorems 4.9 and 4.13 in \cite{GolG} for details). The set $J^k_s(S^1,\mathbb{R}^2)$ is the \textit{$s$-fold $k$-jet bundle} and let $(S^1)^{(2)}$ be the set $\big(S^1\times S^1\big)\setminus\big\{(s,s)\,|\, s\in S^1\big\}$.}

\change{Note that the set of $1$-jets of smooth singular (i.e. not regular) closed curves is a smooth sub\-ma\-nifold of $J^1(S^1,\mathbb{R}^2)$ of codimension $2$. Hence, the subset of $C^{\infty}(S^1,\mathbb{R}^2)$ of regular curves is generic in  $C^{\infty}(S^1, \mathbb{R}^2)$.}

\change{Let $f\in C^{\infty}(S^1,\mathbb{R}^2)$ be regular. Property (ii) follows from the transversality of the map $j^3f:S^1\to J^3(S^1,\mathbb{R}^2)$ to the submanifold of $J^{3}(S^1, \mathbb{R}^2)$ defined as
\begin{align*}
\big\{j^3h(s)\in J^3(S^1, \mathbb{R}^2)\, \big|\, h'(s)\neq 0, \det\big(h'(s),h''(s)\big)=0\big\}.
\end{align*}
It is easy to calculate that this tranvsersality means that if $\det\big(f'(s),f''(s)\big)=0$, then $\det\big(f'(s), f'''(s)\big)\neq 0$, which is equivalent to the fact that an inflexion point is non-degenerate and the curve $f$ has no undulation points (see Definition \ref{DefNonDegInf}). Therefore, Property (ii) is generic by the Thom Transversality Theorem.}

\change{One can show that the transversality of $j_2^1f: (S^1)^{(2)}\to J_2^1(S^1,\mathbb{R}^2)$ to the submanifold
\begin{align*}
\Big\{\big(j^1g(s_1), j^1h(s_2)\big)\in J_2^1(S^1,\mathbb{R}^2)\,\Big|\, g'(s_1)\neq 0, h'(s_2)\neq 0, g(s_1)=h(s_2)\Big\}
\end{align*}
is equivalent to the property that if $f(s_1)=f(s_2)$ for $s_1\neq s_2$, then \linebreak $\det\big(f'(s_1),f'(s_2)\big)\neq 0$. Hence, Property (iii) is generic by the Thom Transversality Theorem for multijets.}

\change{Next, let's notice that the transversality of $j_2^2f: (S^1)^{(2)}\to J_2^2(S^1, \mathbb{R}^2)$ to the sumbanifold
\begin{align*}
\Big\{\big(j^2g(s_1), j^2h(s_2)\big)\in J_2^2(S^1,\mathbb{R}^2)\,\Big|\, g'(s_1)\neq 0, h'(s_2)\neq 0, \det\big(g'(s_1),h'(s_2)\big)=0\Big\}
\end{align*}
means that if $\det\big(f'(s_1),f'(s_2)\big)=0$ for $s_1\neq s_2$, then $\kappa_f^2(s_1)+\kappa_f^2(s_2)>0$. Therefore, Property (iv) is generic.}

\change{Now, let's assume that $f$ satisfies Property (iv). By direct calculations one can show that transversality of $j_2^3f: (S^1)^{(2)}\to J_2^3(S^1, \mathbb{R}^2)$ to the submanifold
\begin{align*}
\Big\{\big(j^3g(s_1), j^3h(s_2)\big)\in & J_2^3(S^1,\mathbb{R}^2)\,\Big|\, \det\big(g'(s_1), h'(s_2)\big)=0, g'(s_1)\neq 0, h'(s_2)\neq 0, \\ 
& g'(s_1)\cdot h'(s_2)>0, \kappa_g(s_1)=\kappa_h(s_2)\Big\}
\end{align*}
means exactly that the function $f$ satisfies Property (v), which ends the proof.}
\end{proof}

Note that any generic curve satisfies the assumptions of Proposition \ref{PropFiniteRotNumber}.

\begin{prop}\label{PropFiniteRotNumber}
\change{Let $M$ be a smooth regular closed curve. If $M$ has only non-degenerate inflexion points and no undulation points, then the rotation number of $M$ and the number of inflexion points of $f$ are finite.}
\end{prop}
\begin{proof}
\change{Let us assume that $f$ has only non-degenerate inflexion points and no undulation points. We will show that the number of inflexion points of $f$ is finite. Contrary to what we want to prove, suppose that the number of these points is not finite. Therefore, we can choose a sequence $\{s_n\}$ of non-degenerate inflexion points such that $s_n\to s_0$. Since
\begin{align*}
\det\big(f'(s_0),f''(s_0)\big)=\lim_{n\to\infty}\det\big(f'(s_n),f''(s_n)\big)=0,
\end{align*} 
the point $f(s_0)$ is also a non-degenerate inflexion point. Hence, 
\begin{align*}
\det\big(f'(s_0),f'''(s_0)\big) = 
\lim_{n\to\infty}\frac{1}{s_n-s_0}\det\big(f'(s_0),f''(s_n)-f''(s_0)\big)= \\ 
=\lim_{n\to\infty}\det\left(\frac{f'(s_0)-f'(s_n)}{s_n-s_0},f''(s_n)\right)=
-\det\big(f''(s_0),f''(s_0)\big)=0,
\end{align*}
which is a contradiction.}

\change{Let the rotation number of $f$ be infinite and let $f$ be the arclength parameterization. Then, for every angle $\psi\in[0,2\pi)$, we can find a sequence $\{s_n^\psi\}$ such that $s_n^{\psi}\to s_0^\psi$, $f'(s_n^\psi)=(\cos\psi,\sin\psi)$, and $f''(s_n^\psi)=\kappa(s_n^\psi)(-\sin\psi,\cos\psi)$. Since $f'$, $f''$, $\kappa$ are continuous, we get that $f'(s_0^\psi)=(\cos\psi,\sin\psi)$ and $f''(s_0^\psi)=\kappa(s_0^\psi)(-\sin\psi,\cos\psi)$. Next, let's notice that
\begin{align*}
\kappa(s_0^\psi)=\det\big(f'(s_0^\psi),f''(s_0^\psi)\big)=\lim_{n\to\infty}\frac{1}{s_n^\psi-s_0^\psi}\det\big(f'(s_0^\psi), f'(s_n^\psi)-f'(s_0^\psi)\big)=0.
\end{align*}
Therefore, for each $\psi\in[0,2\pi)$ there exists $s_0^\psi$ such that $f(s_0^\psi)$ is an inflexion point. Since $f$ has only finite number of inflexion points, we get that $s_0^{\psi_1}=s_0^{\psi_2}$ for some $\psi_1\neq\psi_2$. But then
$$f'(s_0^{\psi_1})=\left(\cos\psi_1,\sin\psi_1\right)=\left(\cos\psi_2,\sin\psi_2\right)=f'(s_0^{\psi_2}),$$
which is impossible. Therefore, the rotation number of $f$ is finite.}
\end{proof}

By genericity of a curve and by Lemma \ref{LemParallelCurvature}(i) we get the following proposition.

\begin{prop}\label{CorInflofCS}
Let $a, b$ be a parallel pair of a generic regular closed curve $M$. Then $a-b$ and $b-a$ are inflexion points of $\SC( M)$ if and only if one of $a$, $b$ is an inflexion point of $M$.
\end{prop}

\begin{defn}
\change{We say that a point $c$ is a \textit{center of symmetry} of a curve $M$ if for any point $a$ in $M$ the point $2c-a$ belongs to $M$.}
\end{defn}

\begin{prop}\label{PropCenSym}
Given a closed regular curve $M$, the origin of $\mathbb{R}^2$ is the center of symmetry of the secant caustic $\SC(M)$.
\end{prop}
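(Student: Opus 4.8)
The plan is to prove the statement in the form it is actually needed: taking $c=\mathbf{0}$ in the definition of center of symmetry, the point $2c-x$ becomes $-x$, so the claim is exactly that $\SC(M)$ is invariant under the central reflection $\sigma\colon x\mapsto -x$, i.e. that $-p\in\SC(M)$ whenever $p\in\SC(M)$. I would therefore aim to show $\SC(M)=-\SC(M)$.

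First I would write $\SC(M)=\mathrm{cl}\,P$, where $P=\{a-b\mid a,b\text{ is a parallel pair of }M\}$ is the set before closure. The decisive observation is that being a parallel pair is a \emph{symmetric} relation on pairs of points of $M$: by Definition \ref{parallelpair} the tangent lines at $a$ and $b$ are parallel if and only if the tangent lines at $b$ and $a$ are parallel, and this condition is manifestly unchanged under swapping the two points. Hence whenever $a-b\in P$, the pair $(b,a)$ is again a parallel pair, so $b-a=-(a-b)\in P$. This gives the key identity $P=-P$, that is, $P$ is already symmetric with respect to the origin at the level of the generating set.

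It then remains to pass to the closure. Since $\sigma$ is a homeomorphism of $\mathbb{R}^2$, it commutes with the closure operation, so $\mathrm{cl}(-P)=-\,\mathrm{cl}(P)$. Combining this with $P=-P$ yields $\SC(M)=\mathrm{cl}(P)=\mathrm{cl}(-P)=-\,\mathrm{cl}(P)=-\SC(M)$. Consequently $2\cdot\mathbf{0}-p=-p\in\SC(M)$ for every $p\in\SC(M)$, which is precisely the assertion that the origin is the center of symmetry of $\SC(M)$.

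I do not expect a genuine obstacle here, since the result follows immediately from the symmetry of the parallel-pair relation. The only point that deserves a word of care is the closure in the definition of $\SC(M)$, which contributes the origin itself as a limit of differences $a-b$ along inflexion points (as noted in the Remark following the definition); but because $\sigma$ fixes the origin and is a homeomorphism, these limit points cause no trouble and the argument above handles them automatically.
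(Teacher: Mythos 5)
Your proof is correct and follows essentially the same route as the paper's: the paper also observes that if $a,b$ is a parallel pair then both $a-b$ and $b-a$ lie in $\SC(M)$, and concludes symmetry about the origin. Your version merely makes the passage to the closure explicit, which the paper leaves as "easy to see."
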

\begin{proof}
Let $a,b$ be a parallel pair of $M$. Then $a-b$ and $b-a$ belong to $\SC(M)$ and it is easy to see that $\bold{0}$ is the center of symmetry of $\SC(M)$.
\end{proof}

Proposition \ref{PropCenSym} implies the following corollary.

\begin{cor}\label{CorEvenNumberCuspsInfl}
If $M$ is a generic closed regular curve, then the number of cusps and the number of inflexion points of $\SC(M)$ are even.
\end{cor}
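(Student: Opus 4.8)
The plan is to exploit the central symmetry of $\SC(M)$ about the origin, which was just established in the preceding proposition. The key structural fact is that the map $x\mapsto -x$ is an orientation-preserving isometry of $\mathbb{R}^2$ (it is rotation by $\pi$, with determinant $+1$) that sends $\SC(M)$ to itself. Since $\SC(M)$ has no intrinsic center (the origin lies \emph{on} the secant caustic, as it is the image of the diagonal and of inflexion points, rather than at a point of the curve), I would first argue that this antipodal involution $\iota\colon x\mapsto -x$ acts freely on the set of cusps and on the set of inflexion points of $\SC(M)$.

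The main step is to verify that $\iota$ preserves the geometric type of singular points and carries cusps to cusps and inflexion points to inflexion points. This follows because $\iota$ is a diffeomorphism of the plane: the cusp points of $\SC(M)$ are exactly the points where the branch set fails to be an immersion, and inflexion points are where the signed curvature of a smooth branch vanishes; both conditions are intrinsic and invariant under the isometry $\iota$. Concretely, if $a,b$ is a parallel pair contributing the point $a-b$, then the pair $b,a$ is also parallel and contributes $b-a = \iota(a-b)$, and the local parametrizations of $\SC(M)$ near these two points are related by composition with $\iota$; hence one is a cusp (resp. inflexion point) if and only if the other is.

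It then remains to check that $\iota$ has \emph{no fixed points} among the cusps and inflexion points, so that these points are partitioned into two-element orbits $\{x,-x\}$, forcing both counts to be even. The only fixed point of $\iota$ is the origin itself, so I would argue that the origin is neither a cusp nor an inflexion point of $\SC(M)$ in the generic situation — this is where genericity is essential, since at the origin several branches of $\SC(M)$ meet (coming from the various inflexion points of $M$ and from pairs approaching the diagonal), and generically these intersections are transverse rather than singularities of a single branch. I expect \textbf{this last point to be the main obstacle}: one must rule out, using the genericity hypothesis and the earlier description of the local structure of $\SC(M)$ near the origin, that a cusp or an inflexion point of an individual smooth branch coincides with the origin. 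Once the origin is excluded, the free involution argument yields the evenness of both numbers immediately.
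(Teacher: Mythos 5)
Your overall strategy --- use the central symmetry $\iota\colon x\mapsto -x$ of $\SC(M)$ and pair cusps with cusps and inflexion points with inflexion points via $a-b\leftrightarrow b-a$ --- is exactly how the paper obtains this corollary from the preceding proposition. The step where your argument goes wrong is the treatment of the fixed point of $\iota$. You propose to show that generically the origin is neither a cusp nor an inflexion point of $\SC(M)$. For cusps this is fine, but for inflexion points it is false whenever $M$ is not convex: by Proposition \ref{PropPassOrigin} the secant caustic passes through the origin at every inflexion point of $M$, and by Theorem \ref{ThmAlgPartBetweenIflPt} each of the $n$ branches through the origin passes through it twice and \emph{each time the origin is an inflexion point of that branch}. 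Indeed, the count in Theorem \ref{ThmInflofSC}(i) explicitly includes $2n$ inflexion points of $\SC(M)$ located at the origin. Genericity cannot remove these: the transversality of the self-intersections of $\SC(M)$ at the origin is a separate matter from the vanishing of the curvature of each individual branch there, and the two are not mutually exclusive.

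The correct way to close the argument is not to exclude the origin but to count the inflexion points sitting there and observe that their number is even. One can do this either by noting that a generic closed curve has an even number $2n$ of inflexion points and each one produces exactly one passage of a branch of $\SC(M)$ through the origin (so $2n$ inflexion points of $\SC(M)$ at the origin), or, staying within your involution framework, by observing that each centrally symmetric branch through the origin meets it at two distinct parameter values (corresponding to the two inflexion points $p_k$, $p_l$ of $M$ in the glueing scheme of Lemma \ref{LemPropMaxGlueSchemes}(ii)), so these two passages pair up with each other. With that correction, the free part of the involution handles all points $a-b\neq 0$ and the evenness of both counts follows.
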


Later we will show that Corollary \ref{CorEvenNumberCuspsInfl} still holds for each branch of the secant caustic of a planar curve -- see Theorems \ref{ThmInflofSC} and \ref{ThmAlgEvenNoOfCusp}.

\begin{defn}\label{DefCurved}
\change{Let $a,b$ be a parallel pair of $ M$ and assume that $a$ and $b$ are not inflexion points. Let $\tau_{p}$ denote the translation by a vector $p\in\mathbb{R}^2$. Then we say that $M$ is \textit{curved in the same side at $a$ and $b$} (respectively \textit{curved in the different sides at $a$ and $b$}) if the germs of the curves $M$ and $\tau_{a-b}(M)$ at $a=\tau_{a-b}(b)$ are on the same side (respectively on the different sides) of the tangent line to $ M$ at $a$.}
\end{defn}

We illustrate Definition \ref{DefCurved} in Fig. \ref{FigCurved}.

\begin{figure}[h]
\centering
\includegraphics[scale=0.35]{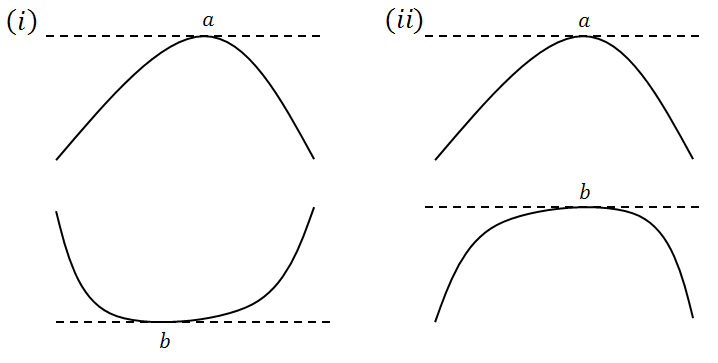}
\caption{(i) A curve curved in the different sides at a parallel pair $a, b$, (ii) a curve curved in the same side at a parallel pair $a, b$}
\label{FigCurved}
\end{figure}

By Lemma \ref{LemParallelCurvature} and Definition \ref{DefCurved} we obtain the following.

\begin{prop}\label{PropSingularPointOfBs}
Let $a,b$ be a parallel pair of $ M$. The curve $\SC(M)$ is singular at the point $a-b$ if and only if $M$ is curved in the same side at $a, b$ and  $|\kappa_{ M}(a)|=|\kappa_{ M}(b)|$.
\end{prop}

Let us notice that in the case of the Wigner caustic we have a similar result to Proposition \ref{PropSingularPointOfBs}, but we have to replace the phrase ''curved in the same side'' by ''curved in the different sides'' \cite{D-Z}.

\change{The Centre Symmetry Set of a curve $M$ has an asymptote if and only if $M$ is curved in the same side at a parallel pair $a,b$, and $\big|\kappa_{M}(a)\big|=\big|\kappa_{M}(b)\big|$ \cite{GH1}. Therefore, we have the following remark.}

\begin{rem}
A parallel pair $a, b$ of a curve $M$ gives rise to a singular point of the secant caustic if and only if the Centre Symmetry Set has an asymptote.
\end{rem}

\begin{prop}\label{PropPassOrigin}
Let $ M$ be a regular closed curve. At an inflexion point of $M$ the set $\SC(M)$ passes through the origin.
\end{prop}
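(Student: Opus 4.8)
The plan is to realize the origin as a limit of differences $a-b$ over parallel pairs $(a,b)$ that both collapse onto the inflexion point, exactly as announced in the Remark following the definition of $\SC(M)$. First I would fix an arc-length parameterization $f$ of $M$ near the inflexion point, say $f(s_0)$ is the inflexion, and introduce the tangent-angle function $\theta$, determined by $f'(s)=(\cos\theta(s),\sin\theta(s))$, so that $\theta'(s)=\kappa_{M}(f(s))$. This reduces the problem to the behaviour of a single scalar function $\theta$ near $s_0$.

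At an inflexion point the curvature vanishes and changes sign, hence $\theta'(s_0)=0$ and $\theta'$ changes sign at $s_0$; consequently $\theta$ attains a strict local extremum at $s_0$. The key step is then an intermediate-value argument: for every value $c$ strictly between $\theta(s_0)$ and the nearby values of $\theta$ on the appropriate side, there exist $s_1<s_0<s_2$ with $\theta(s_1)=\theta(s_2)=c$. By construction the tangent vectors $f'(s_1)$ and $f'(s_2)$ coincide, so $a=f(s_1)$ and $b=f(s_2)$ form a parallel pair parameterized in the same direction, lying on opposite sides of the inflexion point.

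Letting $c\to\theta(s_0)$ forces $s_1,s_2\to s_0$, so that $a,b\to f(s_0)$ and therefore $a-b=f(s_1)-f(s_2)\to \mathbf{0}$. Since each such $a-b$ is a point of $\SC(M)$ by definition, the origin lies in the closure defining $\SC(M)$; that is, $\SC(M)$ passes through the origin at the inflexion point. One should also remark that the pairs produced this way are the ``same direction'' parallel pairs, so by Lemma \ref{LemParallelCurvature}(i) the emerging branch of $\SC(M)$ is tangent to $M$ at the inflexion point, giving a more precise picture of how it reaches the origin.

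The only delicate point I expect is the assertion that $\theta$ genuinely turns back, i.e.\ has a local extremum at $s_0$: this requires the inflexion to be one at which $\kappa_{M}$ actually changes sign. If $\kappa_{M}$ vanished without changing sign, the tangent-angle function need not reverse and the pairs $s_1<s_0<s_2$ with equal tangent direction need not exist. For the (generic) inflexions considered here $\kappa_{M}'(s_0)\neq 0$, so $\theta$ has a strict local extremum and the intermediate-value step applies without obstruction.
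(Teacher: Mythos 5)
Your argument is correct and is essentially the paper's own proof, just written out in full: the paper simply asserts that near an inflexion point there are parallel pairs approaching it from opposite sides, and you supply the tangent-angle/intermediate-value justification of that fact (including the correct caveat that the curvature must change sign, which holds for the generic inflexions considered). No further comment is needed.
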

\begin{proof}
It is a consequence of the definition of the secant caustic and the fact that nearby an inflexion point $p$ of a curve $M$ there exist sequences of parallel pair approaching $p$ from the different sides.
\end{proof}

\begin{rem}
\change{Generically the reciprocal of Proposition \ref{PropPassOrigin} is true by Theorem \ref{ThmGenericCS}(ii).}
\end{rem}

\begin{prop}
Let $ M$ be a generic oval. Then for a generic $\lambda\neq\frac{1}{2}$ we have:
\begin{align*}
\Eq_{\lambda}(\SC( M))=\SC(\Eq_{\lambda}( M)).
\end{align*}
\end{prop}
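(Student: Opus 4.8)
The plan is to choose a common parametrization of $M$ by its tangent angle and then to show that \emph{both} sides of the claimed equality are just the secant caustic $\SC(M)$ rescaled by the factor $2\lambda-1$. Since $M$ is an oval, parametrize it by its tangent angle $\theta\in[0,2\pi)$, so that $\gamma'(\theta)=\rho(\theta)T(\theta)$ with $T(\theta)=(\cos\theta,\sin\theta)$ and $\rho(\theta)>0$ the radius of curvature. As the tangent direction attains each value in $[0,\pi)$ exactly twice, the parallel pairs of $M$ are precisely the pairs $\big(\gamma(\theta),\gamma(\theta+\pi)\big)$, and at such a pair the tangent vectors are antiparallel, namely $\gamma'(\theta+\pi)=-\rho(\theta+\pi)T(\theta)$. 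Accordingly I would write the two derived curves as $\delta_\lambda(\theta)=\lambda\gamma(\theta)+(1-\lambda)\gamma(\theta+\pi)$ for $E_\lambda(M)$ and $\sigma(\theta)=\gamma(\theta)-\gamma(\theta+\pi)$ for $\SC(M)$.

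First I would pin down the parallel-pair structure of $E_\lambda(M)$ and of $\SC(M)$ themselves. Differentiating gives $\delta_\lambda'(\theta)=\big(\lambda\rho(\theta)-(1-\lambda)\rho(\theta+\pi)\big)T(\theta)$ and $\sigma'(\theta)=\big(\rho(\theta)+\rho(\theta+\pi)\big)T(\theta)$; in both cases the velocity is a scalar multiple of $T(\theta)$, so the unoriented tangent line at the parameter $\theta$ has direction $\theta\bmod\pi$, exactly as for $M$. Hence the assignment of a parameter to the direction of its tangent line is again two-to-one with fibres $\{\theta,\theta+\pi\}$, and therefore the parallel pairs of these curves are precisely $\big(\delta_\lambda(\theta),\delta_\lambda(\theta+\pi)\big)$ and $\big(\sigma(\theta),\sigma(\theta+\pi)\big)$. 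This is the step I expect to be the crux: $E_\lambda(M)$ is not convex, since the scalar $\lambda\rho(\theta)-(1-\lambda)\rho(\theta+\pi)$ vanishes and produces cusps, so one must argue that no further parallel pairs arise at cusps or at self-intersections. The computation $\kappa_{E_\lambda}(\theta)=1/\big|\lambda\rho(\theta)-(1-\lambda)\rho(\theta+\pi)\big|>0$ shows that $E_\lambda(M)$ has no inflexions (so, by Proposition \ref{PropPassOrigin}, no spurious passage through the origin is introduced by the closure), and for a generic oval and generic $\lambda$ its only singularities are ordinary cusps, at which the limiting tangent line still has direction $\theta$, together with transverse double points. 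This is exactly where the genericity hypotheses are used, and it guarantees that the antipodal pairs exhaust the parallel pairs.

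Finally I would carry out the two short algebraic reductions. Using $\gamma(\theta+2\pi)=\gamma(\theta)$ one has $\sigma(\theta+\pi)=-\sigma(\theta)$, so for the parallel pair $\big(\sigma(\theta),\sigma(\theta+\pi)\big)$ of $\SC(M)$,
\[
\lambda\sigma(\theta)+(1-\lambda)\sigma(\theta+\pi)=(2\lambda-1)\sigma(\theta),
\]
whence $E_\lambda(\SC(M))=(2\lambda-1)\,\SC(M)$. On the other hand, for the parallel pair $\big(\delta_\lambda(\theta),\delta_\lambda(\theta+\pi)\big)$ of $E_\lambda(M)$,
\[
\delta_\lambda(\theta)-\delta_\lambda(\theta+\pi)=\big(\lambda-(1-\lambda)\big)\big(\gamma(\theta)-\gamma(\theta+\pi)\big)=(2\lambda-1)\sigma(\theta),
\]
whence $\SC(E_\lambda(M))=(2\lambda-1)\,\SC(M)$ as well. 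Comparing the two displays yields $E_\lambda(\SC(M))=\SC(E_\lambda(M))$. The hypothesis $\lambda\neq\frac12$ enters here precisely to ensure that $(2\lambda-1)\sigma(\theta)\neq\bold{0}$, so that the pairs produced above are genuine parallel pairs of distinct points, rather than the degenerate situation $\lambda=\frac12$ in which $E_{1/2}(M)$ is traversed twice and the construction collapses.
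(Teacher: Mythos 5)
Your proof is correct and follows essentially the same route as the paper: the paper likewise reduces both sides to the set $\{(2\lambda-1)(a-b)\ |\ a,b \text{ a parallel pair of } M\}$, merely writing ``one can check'' where you carry out the tangent-angle computation and verify that the parallel pairs of $E_{\lambda}(M)$ and of $\SC(M)$ are exactly the antipodal ones. The supporting observations you make --- that $\SC(M)$ is an oval and that $E_{\lambda}(M)$ has at worst cusps, so tangent lines and hence parallel pairs are well defined everywhere --- are precisely the two preliminary remarks in the paper's proof.
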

\begin{proof}
Since for each parallel pair $a, b$ of $M$, the curve $M$ is curved in the different sides at $a, b$, we get from Proposition \ref{PropSingularPointOfBs} that the set $\SC( M)$ is an oval. Let us notice that for a generic $\lambda\neq\frac{1}{2}$ the set $\Eq_{\lambda}(M)$ is a piecewise regular curve with at most cusp singularities \cite{GZ}. Hence we have well defined tangent line at any point of $\Eq_{\lambda}(M)$ and at any point of $\SC(M)$ (see also Definition \ref{DefTangentSC}).

One can check that both $\Eq_{\lambda}(\SC( M))$ and $\SC(\Eq_{\lambda}( M))$ are equal to the following set:
 \begin{align*}
\big\{(2\lambda -1)(a-b)\ \big|\ a,b\text{ is a parallel pair of }M\big\}.
\end{align*}
\end{proof}

\begin{thm}\label{ThmCurvPosWrinkled} 
Let $\mathcal{P}$, $\mathcal{Q}$ be embedded curves with end points $p_0, p_1$ and $q_0, q_1$ respectively and suppose that:
\begin{enumerate}[(i)]
\item The points $p_i, q_i$ form a parallel pair for $i=0,1$,
\item For every $q \in \mathcal{Q}$, there exists  $p\in \mathcal{P}$ such that $p, q$ is a parallel pair and if $p_i, q$ is a parallel pair then $q=q_i$ for $i=0,1$,
\item $\kappa_{\mathcal{P}}(p)>0$ for $p\neq p_0$, $\kappa_{\mathcal{Q}}(q_0)> 0$ and $\kappa_{\mathcal{Q}}(q_1)\geqslant 0$,
\item $\mathcal{P}$, $\mathcal{Q}$ are curved in the same side at parallel pairs $p,q$ close to $p_0,q_0$ and $p_1, q_1$, respectively.
\end{enumerate}

Then, provided the curvatures of $\mathcal{P}$ and $\mathcal{Q}$ satisfy the following condition
\begin{align}\label{CondCurvSings}
\Big(\kappa_{\mathcal{Q}}(q_0)-\kappa_{\mathcal{P}}(p_0)\Big)\cdot
\Big(\kappa_{\mathcal{Q}}(q_1)-\kappa_{\mathcal{P}}(p_1)\Big)<0,
\end{align}
the secant caustic of $\mathcal{P}\cup\mathcal{Q}$ has at least two singular points.
\end{thm}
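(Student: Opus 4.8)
The plan is to reduce the statement to an application of the intermediate value theorem, with Corollary~\ref{PropSingularPointOfBs} as the device that certifies singular points. By that corollary, a parallel pair $p,q$ with $p\in\mathcal P$ and $q\in\mathcal Q$ produces a singular point $p-q$ of $\SC(\mathcal P\cup\mathcal Q)$ exactly when $\mathcal P,\mathcal Q$ are curved in the same side at $p,q$ and $|\kappa_{\mathcal P}(p)|=|\kappa_{\mathcal Q}(q)|$. Thus it suffices to exhibit one cross-pair $p^\ast,q^\ast$ (with $p^\ast\in\mathcal P$, $q^\ast\in\mathcal Q$), curved in the same side, at which the two signed curvatures coincide and are positive; the second singular point then comes automatically from symmetry.

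First I would set up the matching between the two curves. Since $\kappa_{\mathcal P}>0$ on $\mathcal P\setminus\{p_0\}$, the tangent direction is a strictly monotone, hence admissible, parameter along $\mathcal P$, so a point of $\mathcal P$ is determined by its tangent direction. Using hypothesis~(ii) (every $q$ has a partner $p$, and $p_i$ only partners $q_i$) together with hypothesis~(iv), I would define a map $q\mapsto p(q)$ sending each $q$ on the arc from $q_0$ to $q_1$ to the point of $\mathcal P$ forming a parallel pair curved in the same side with $q$, normalised so that $p(q_i)=p_i$. The point to verify carefully is that this selection is single-valued and continuous along the whole arc: continuity should follow because $p(q)$ is read off from the (continuous) tangent direction at $q$ via the admissible parameter on $\mathcal P$, while the ``same side'' normalisation is a locally constant alternative (for parallel pairs with positive curvatures, ``same side'' is equivalent to the tangent vectors pointing the same way) that is pinned down at both ends by~(iv) and therefore cannot switch to ``different sides''.

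With this in hand I would consider the continuous function
\begin{align*}
g(q)=\kappa_{\mathcal Q}(q)-\kappa_{\mathcal P}\big(p(q)\big),
\end{align*}
along $\mathcal Q$ from $q_0$ to $q_1$. Its endpoint values are $g(q_0)=\kappa_{\mathcal Q}(q_0)-\kappa_{\mathcal P}(p_0)$ and $g(q_1)=\kappa_{\mathcal Q}(q_1)-\kappa_{\mathcal P}(p_1)$, so condition~\eqref{CondCurvSings} says precisely that $g(q_0)g(q_1)<0$. By the intermediate value theorem there is an interior point $q^\ast$ with $g(q^\ast)=0$, that is $\kappa_{\mathcal P}(p^\ast)=\kappa_{\mathcal Q}(q^\ast)$ where $p^\ast=p(q^\ast)$. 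Because $q^\ast$ is interior, hypothesis~(ii) gives $p^\ast\neq p_0$, so this common value is strictly positive; hence $|\kappa_{\mathcal P}(p^\ast)|=|\kappa_{\mathcal Q}(q^\ast)|$, and by construction $\mathcal P,\mathcal Q$ are curved in the same side at $p^\ast,q^\ast$. Corollary~\ref{PropSingularPointOfBs} then shows that $p^\ast-q^\ast$ is a singular point of $\SC(\mathcal P\cup\mathcal Q)$.

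Finally, the reversed pair $q^\ast,p^\ast$ is again a parallel pair curved in the same side with the same two curvatures, and the singularity criterion of Corollary~\ref{PropSingularPointOfBs} is symmetric in its two points, so $q^\ast-p^\ast=-(p^\ast-q^\ast)$ is a singular point as well; since $p^\ast\neq q^\ast$ as points of the plane we have $p^\ast-q^\ast\neq\mathbf{0}$, so the two singular points are distinct, which proves the claim. I expect the main obstacle to be the construction of the previous paragraph, namely establishing that $q\mapsto p(q)$ is a well-defined, continuous, same-side selection across the entire arc — in particular ruling out that the partner of $q$ could jump, and controlling the ``same side'' condition near endpoints where a curvature is allowed to vanish; once that is settled, the intermediate-value and symmetry steps are routine.
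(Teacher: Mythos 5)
Your proposal is correct and follows essentially the same route as the paper: both construct the parallel-pair correspondence $q\mapsto p(q)$ between $\mathcal{Q}$ and $\mathcal{P}$ (the paper via the implicit function theorem applied to $f'(s)=g'(t(s))$, you via the tangent direction as a monotone parameter on $\mathcal{P}$), locate an interior pair of equal curvatures by an intermediate value argument, and conclude with Corollary~\ref{PropSingularPointOfBs}. The only cosmetic difference is that the paper applies the Darboux property to the ratio $t'(s)=\kappa_{\mathcal{Q}}/\kappa_{\mathcal{P}}$ (treating the case $\kappa_{\mathcal{P}}(p_0)=0$ by a limit), whereas your use of the curvature difference handles that degenerate endpoint directly; your extra care about the continuity and the same-side normalisation of the selection $q\mapsto p(q)$ is a point the paper leaves implicit.
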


\begin{proof} We shall use the method of the proof of Proposition 3.3 in \cite{D-Z3}
Let us assume that $\kappa_{\mathcal{P}}(p_0)>0$. 

Let $g:[t_0,t_1]\to\mathbb{R}^2, f:[s_0,s_1]\to\mathbb{R}^2$ be the arc length parameterizations of $\mathcal{P}, \mathcal{Q}$, respectively. From \text{(ii)-(iii)} we have that there exists a function $t:[s_0,s_1]\to[t_0,t_1]$ such that
\begin{align}\label{ParallelPropertyThm31}
f'(s)=g'(t(s)).
\end{align}
By the implicit function theorem the function $t$ is smooth and $\displaystyle t'(s)=\frac{\kappa_{\mathcal{Q}}(f(s))}{\kappa_{\mathcal{P}}(g(t(s)))}$. 
From (\ref{CondCurvSings}) we obtain that $\big(t'(s_0)-1\big)\cdot\big(t'(s_1)-1\big)<0$. Therefore by Darboux Theorem, there exists $s\in(s_0,s_1)$ such that $t'(s)=1$. Finally, from Proposition \ref{PropSingularPointOfBs} we get that the points $f(s)-g(t(s))$ and $g(t(s))-f(s)$ are singular points of $\SC\left(\mathcal{P}\cup\mathcal{Q}\right)$, which ends the proof in the case that $\kappa_{\mathcal{P}}(p_0)>0$

If we assume that $\kappa_{\mathcal{P}}(p_0)=0$, then the proof is similar except that the domain of the function $t'$ is $(s_0,s_1]$. Thus we replace $t'(s_0)$ by the limit of $t'$ at $s_0$.
\end{proof}

In Fig. \ref{FigWrinkled1} we illustrate two curves satisfying the assumptions of Theorem \ref{ThmCurvPosWrinkled}.

\begin{figure}[h]
\centering
\includegraphics[scale=0.3]{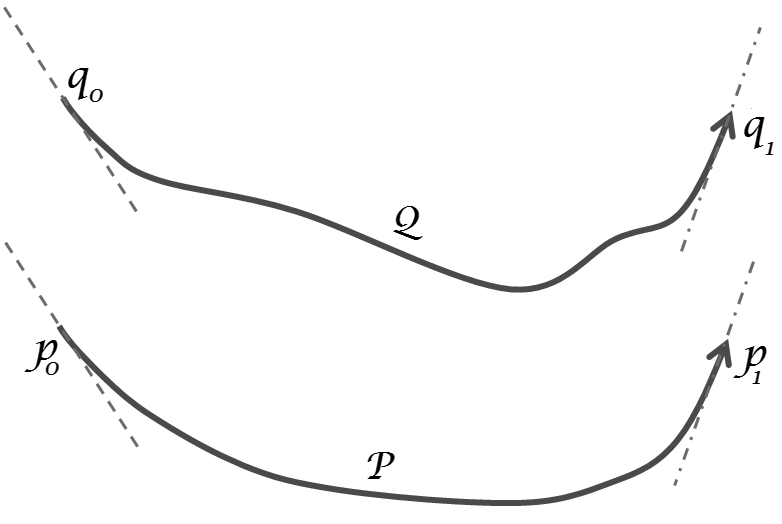}
\caption{Arcs satisfying assumptions of Theorem \ref{ThmCurvPosWrinkled}}
\label{FigWrinkled1}
\end{figure}

\begin{cor}\label{CorThmCurvPosWrinkled}
Under the assumptions (i)--(iv) of Theorem \ref{ThmCurvPosWrinkled}, provided $\kappa_{\mathcal{P}}(p_0)=\kappa_{\mathcal{Q}}(q_1)=0$ the secant caustic of $\mathcal{P}\cup\mathcal{Q}$ has at least two singular points.
\end{cor}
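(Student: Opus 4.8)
The plan is to derive the corollary directly from Theorem \ref{ThmCurvPosWrinkled} by verifying that its hypotheses imply the sign condition \eqref{CondCurvSings}. The corollary adds the single extra assumption $\kappa_{\mathcal{P}}(p_0)=\kappa_{\mathcal{Q}}(q_1)=0$ on top of (i)--(iv), so the natural strategy is to compute the product appearing in \eqref{CondCurvSings} under this assumption and show it is negative, after which the conclusion follows immediately from the theorem. The whole point is that the special case where both curvatures vanish is a boundary case not literally covered by the strict inequality \eqref{CondCurvSings}, yet the proof mechanism of Theorem \ref{ThmCurvPosWrinkled} still applies (indeed the theorem's proof explicitly treats $\kappa_{\mathcal{P}}(p_0)=0$ by passing to a limit of $t'$).

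First I would substitute the vanishing curvatures into the product $\big(\kappa_{\mathcal{Q}}(q_0)-\kappa_{\mathcal{P}}(p_0)\big)\cdot\big(\kappa_{\mathcal{Q}}(q_1)-\kappa_{\mathcal{P}}(p_1)\big)$. Using $\kappa_{\mathcal{P}}(p_0)=0$ and $\kappa_{\mathcal{Q}}(q_1)=0$ this becomes $\kappa_{\mathcal{Q}}(q_0)\cdot\big(-\kappa_{\mathcal{P}}(p_1)\big)=-\kappa_{\mathcal{Q}}(q_0)\,\kappa_{\mathcal{P}}(p_1)$. By hypothesis (iii) we have $\kappa_{\mathcal{Q}}(q_0)>0$, and $\kappa_{\mathcal{P}}(p_1)>0$ since $p_1\neq p_0$. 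Hence the product equals $-\kappa_{\mathcal{Q}}(q_0)\,\kappa_{\mathcal{P}}(p_1)<0$, so condition \eqref{CondCurvSings} is satisfied. Invoking Theorem \ref{ThmCurvPosWrinkled} then yields at least two singular points of the secant caustic of $\mathcal{P}\cup\mathcal{Q}$.

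The one subtlety I would be careful about is that \eqref{CondCurvSings} is a strict inequality, and with $\kappa_{\mathcal{P}}(p_0)=0$ the function $t'(s)$ in the theorem's proof is only defined on the half-open interval $(s_0,s_1]$; this is exactly the degenerate endpoint situation the final paragraph of that proof addresses by replacing $t'(s_0)$ with the limit of $t'$ as $s\to s_0^+$. So the cleaner route is to verify that the hypotheses of the corollary are a sub-case of the hypotheses of the theorem, namely that $\kappa_{\mathcal{P}}(p_0)=\kappa_{\mathcal{Q}}(q_1)=0$ together with (iii) forces the strict sign condition above, and then quote the theorem (including its degenerate-endpoint clause) verbatim.

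I do not expect a genuine obstacle here, since the corollary is essentially a specialization in which the generic inequality \eqref{CondCurvSings} is automatically forced by the curvature signs in (iii). The only thing requiring attention is confirming that the vanishing of $\kappa_{\mathcal{P}}(p_0)$ is compatible with the theorem's proof, which it is by design. Thus the corollary follows at once from Theorem \ref{ThmCurvPosWrinkled}.
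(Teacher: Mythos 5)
Your proposal is correct and coincides with the paper's (implicit) argument: the corollary is stated without a separate proof precisely because the extra hypothesis $\kappa_{\mathcal{P}}(p_0)=\kappa_{\mathcal{Q}}(q_1)=0$ together with assumption (iii) reduces the product in \eqref{CondCurvSings} to $-\kappa_{\mathcal{Q}}(q_0)\,\kappa_{\mathcal{P}}(p_1)<0$, so Theorem \ref{ThmCurvPosWrinkled} (including its clause handling the degenerate endpoint $\kappa_{\mathcal{P}}(p_0)=0$) applies verbatim. Your attention to the half-open domain of $t'$ is exactly the point the theorem's final paragraph already covers, so nothing further is needed.
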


The result of Corollary \ref{CorThmCurvPosWrinkled} is illustrated in Fig. \ref{FigTwoInflPts}.

\begin{figure}[h]
\centering
\includegraphics[scale=0.25]{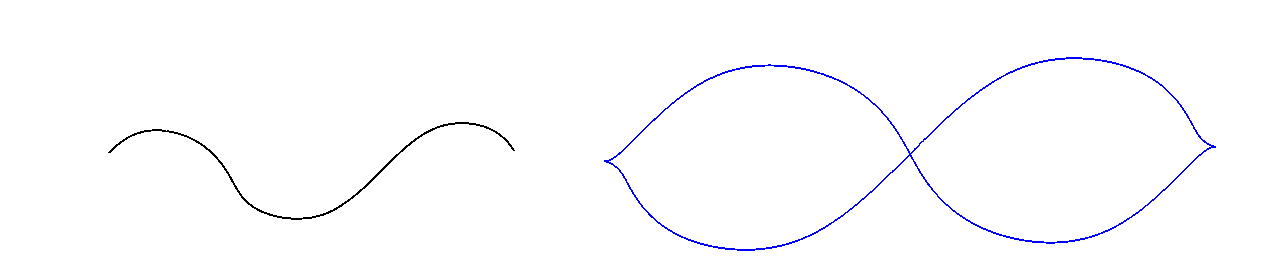}
\caption{A curve $M$ (on the left) and $\SC( M)$ (on the right)}
\label{FigTwoInflPts}
\end{figure}

In what follows, we shall denote  the translation by a vector $v$ as $\tau_{v}$.

\begin{thm}\label{pqTheorem2}
Let $\mathcal{P}$ and $\mathcal{Q}$ be embedded regular curves with endpoints $p_0$, $p_1$ and $q_0$, $q_1$, respectively. Let $l_0$ be a line through $q_0$ parallel to $T_{q_1}\mathcal{Q}$, $l_q$ be a line through $q_1$ parallel to $T_{q_0}\mathcal{Q}$ and $l_p'$ be a line through $\tau_{q_0-p_0}(p_1)$ parallel to $T_{q_0}\mathcal{Q}$. Denote  $c=l_p'\cap T_{q_1}\mathcal{Q}$, $b_0=l_0\cap l_p'$ and $b_1=T_{q_0}\mathcal{Q}\cap T_{q_1}\mathcal{Q}$ and let us assume that
\begin{enumerate}[(i)]
\item $T_{p_i}\mathcal{P}\| T_{q_i}\mathcal{Q}$ for $i=0,1$.
\item The curvatures of $\mathcal{P}$ and $\mathcal{Q}$ are positive.
\item The absolute values of rotation number of $\mathcal{P}$ and $\mathcal{Q}$ are the same and smaller than $\displaystyle\frac{1}{2}$.
\item For every point $p \in \mathcal{P}$, there exists a point $q \in \mathcal{Q}$ and for every point $q \in \mathcal{Q}$ there exists a point $p \in \mathcal{P}$, such that $p$, $q$ is a parallel pair.
\item $\mathcal{P}$ and $\mathcal{Q}$ are curved in the same side at every parallel pair $p, q$ such that $p\in\mathcal{P}$ and $q\in\mathcal{Q}$.
\end{enumerate}

Let $\rho_{\max}$ (respectively $\rho_{\min}$) be the maximum (respectively minimum) of the set
\begin{align*}
\left\{\frac{c-b_1}{q_1-b_1}, \frac{c-b_0}{\tau_{q_0-p_0}(p_1)-b_0}\right\}.
\end{align*}

Then provided $\rho_{\max}<1$ or $\rho_{\min}>1$, the secant caustic of $\mathcal{P}\cup\mathcal{Q}$ has at least two singular points.
\end{thm}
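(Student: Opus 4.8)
The plan is to reduce the statement, exactly as in the proof of Theorem~\ref{ThmCurvPosWrinkled}, to detecting a sign change of the difference of the radii of curvature of the two arcs, and then to show that the two displayed ratios are precisely the quantities recording this sign change. First I would pass from arc length to the tangent-angle parameter. By (ii) the curvature is positive, so the oriented tangent direction is a strictly monotone function of arc length on each arc; by (i), (iii), (iv) both arcs sweep out the \emph{same} interval of tangent angles, say $[\theta_0,\theta_1]$, and by (iii) its length $\Delta=\theta_1-\theta_0$ satisfies $0<\Delta<\pi$. Writing $u(\theta)=(\cos\theta,\sin\theta)$ for the normal and $\rho_{\mathcal P}(\theta),\rho_{\mathcal Q}(\theta)$ for the radii of curvature as functions of the common tangent angle, the parallel pair with tangent angle $\theta$ is $\big(p(\theta),q(\theta)\big)$, and the corresponding point of the secant caustic is $q(\theta)-p(\theta)$, whose support function is $h_{\mathcal Q}(\theta)-h_{\mathcal P}(\theta)$. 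By Corollary~\ref{PropSingularPointOfBs} together with (v), this point is singular precisely when $\rho_{\mathcal Q}(\theta)=\rho_{\mathcal P}(\theta)$. Thus it suffices to produce a $\theta^\ast\in(\theta_0,\theta_1)$ with $\rho_{\mathcal Q}(\theta^\ast)=\rho_{\mathcal P}(\theta^\ast)$: the points $q(\theta^\ast)-p(\theta^\ast)$ and $p(\theta^\ast)-q(\theta^\ast)$ are then two singular points of $\SC(\mathcal P\cup\mathcal Q)$, by its central symmetry about the origin.

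The heart of the argument is to translate the data $b_0,b_1,c$ into integrals of $\rho_{\mathcal P}$ and $\rho_{\mathcal Q}$. Since $\Delta\neq 0,\pi$, the vectors $u(\theta_0),u(\theta_1)$ are independent, so I may use the oblique coordinates $X=\langle\,\cdot\,,u(\theta_0)\rangle$, $Y=\langle\,\cdot\,,u(\theta_1)\rangle$, in which each of the four lines $T_{q_0}\mathcal Q$, $T_{q_1}\mathcal Q$, $l_0$, $l_p'$ is a coordinate line. The points $c,q_1,b_1$ lie on $T_{q_1}\mathcal Q=\{Y=h_{\mathcal Q}(\theta_1)\}$ and $c,b_0,\tau_{q_0-p_0}(p_1)$ lie on $l_p'=\{X=\langle p_1+q_0-p_0,u(\theta_0)\rangle\}$, so each displayed ratio is a ratio of $X$- (resp.\ $Y$-) differences. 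Using $\tfrac{d}{d\theta}\langle\gamma(\theta),u(\theta_i)\rangle=-\rho(\theta)\sin(\theta-\theta_i)$ for a curve $\gamma$ parameterized by its tangent angle, a direct computation of the intersection points gives
\begin{align*}
\frac{c-b_1}{q_1-b_1}=\frac{\displaystyle\int_{\theta_0}^{\theta_1}\rho_{\mathcal P}(\theta)\sin(\theta-\theta_0)\,d\theta}{\displaystyle\int_{\theta_0}^{\theta_1}\rho_{\mathcal Q}(\theta)\sin(\theta-\theta_0)\,d\theta},\qquad
\frac{c-b_0}{\tau_{q_0-p_0}(p_1)-b_0}=\frac{\displaystyle\int_{\theta_0}^{\theta_1}\rho_{\mathcal Q}(\theta)\sin(\theta_1-\theta)\,d\theta}{\displaystyle\int_{\theta_0}^{\theta_1}\rho_{\mathcal P}(\theta)\sin(\theta_1-\theta)\,d\theta}.
\end{align*}
Because $0<\Delta<\pi$, both weights $\sin(\theta-\theta_0)$ and $\sin(\theta_1-\theta)$ are positive on $(\theta_0,\theta_1)$, and since $\rho_{\mathcal P},\rho_{\mathcal Q}>0$ all four integrals are positive; in particular both ratios are positive, so $\rho_{\min}$ and $\rho_{\max}$ are well-defined positive numbers.

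Finally I would argue by contraposition. If $\SC(\mathcal P\cup\mathcal Q)$ had no singular point on the open arc, then $\rho_{\mathcal Q}-\rho_{\mathcal P}$ would never vanish on $(\theta_0,\theta_1)$ and hence, being continuous, would keep a constant sign there. If $\rho_{\mathcal Q}>\rho_{\mathcal P}$ throughout, comparing the integrals against the positive weights shows the first ratio is $<1$ and the second is $>1$; if $\rho_{\mathcal P}>\rho_{\mathcal Q}$ throughout, the inequalities reverse. Either way one ratio is $<1$ and the other is $>1$, so $\rho_{\min}<1<\rho_{\max}$, contradicting the hypothesis that $\rho_{\max}<1$ or $\rho_{\min}>1$. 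Therefore $\rho_{\mathcal Q}-\rho_{\mathcal P}$ must change sign, producing the desired $\theta^\ast$ and, with it, two singular points.

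I expect the main obstacle to be the bookkeeping in the middle paragraph: correctly identifying the four auxiliary lines as coordinate lines, computing $b_0,b_1,c$ in the oblique frame, and tracking signs so that the geometric ratios come out as the stated ratios of integrals — in particular the fact that $\mathcal P$ and $\mathcal Q$ appear interchanged between the two ratios, which is exactly what makes them straddle $1$ when $\rho_{\mathcal Q}-\rho_{\mathcal P}$ has constant sign. A secondary point to verify carefully is that the relevant parallel pairs are the same-direction ones, so that the support-function identity $h_{\SC(\mathcal P\cup\mathcal Q)}=h_{\mathcal Q}-h_{\mathcal P}$ and the curvature criterion of Corollary~\ref{PropSingularPointOfBs} apply as stated under hypothesis (v).
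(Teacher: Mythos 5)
Your proof is correct and follows essentially the same strategy as the paper's: both reduce, via Corollary \ref{PropSingularPointOfBs} together with hypothesis (v), to showing by the intermediate value theorem that the ratio of curvatures at parallel pairs must pass through $1$, and both obtain the required sign change by integrating the curvature-ratio condition and contradicting the boundary data encoded in $\rho_{\max}$ and $\rho_{\min}$. The only difference is presentational: the paper normalizes by an affine map onto the unit square and integrates the arc-length derivatives of the coordinates of $f$ and $g$, whereas you work in tangent-angle/support-function coordinates and make the two ratios explicit as quotients of weighted integrals $\int\rho(\theta)\sin(\theta-\theta_i)\,\mathrm{d}\theta$ of the radii of curvature --- the same computation in a slightly more transparent form.
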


\begin{proof}
We shall use the method of the proof of Proposition 3.7 in \cite{D-Z3}. Let us consider the case $\rho_{\max}<1$, the proof of the case $\rho_{\min}>1$ is similar.

Let $A:\mathbb{R}^2\to\mathbb{R}^2$ be an affine transformation taking the parallelogram bounded by $T_{q_0}\mathcal{Q}$, $l_q$, $l_0$ $\tau_{q_0-p_0}\left(T_{p_1}\mathcal{P}\right)$ to the unit square. Let $\mathcal{P}'=A\left(\tau_{q_0-p_0}\left(\mathcal{P}\right)\right)$ and $\mathcal{Q}'=A\left(\mathcal{Q}\right)$ (see Fig. \ref{PictureThmParallelogram}). It is enough to show that $\SC(\mathcal{P}'\cup\mathcal{Q}')$ has at least two singular points. We shall take the coordinate system described in Fig. \ref{PictureThmParallelogram}(ii).

Let $L_{\mathcal{P}'}$, $L_{\mathcal{Q}'}$ be the lengths of $\mathcal{P}'$ and $\mathcal{Q}'$ respectively and take arc length parameterizations   of $\mathcal{P}'$ and $\mathcal{Q}'$, respectively given by $[0,L_{\mathcal{P}'}]\ni s\mapsto f(s)=\left(f_1(s),f_2(s)\right)$ and $[0,L_{\mathcal{Q}'}]\ni t\to g(t)=\left(g_1(t),g_2(t)\right)$ , such that $f(0)=g(0)=(0,0)$ and $g(L_{\mathcal{Q}'})=(g_1(L_{\mathcal{Q}'}),1)$, where $0<g_1(L_{\mathcal{Q}'})\leqslant \rho_{\max}$ and $f(L_{\mathcal{P}'})=(1,f_2(L_{\mathcal{P}'}))$, where $0<f_2(L_{\mathcal{P}'})\leqslant \rho_{\max}$.

\begin{figure}[h]
\centering
\includegraphics[scale=0.3]{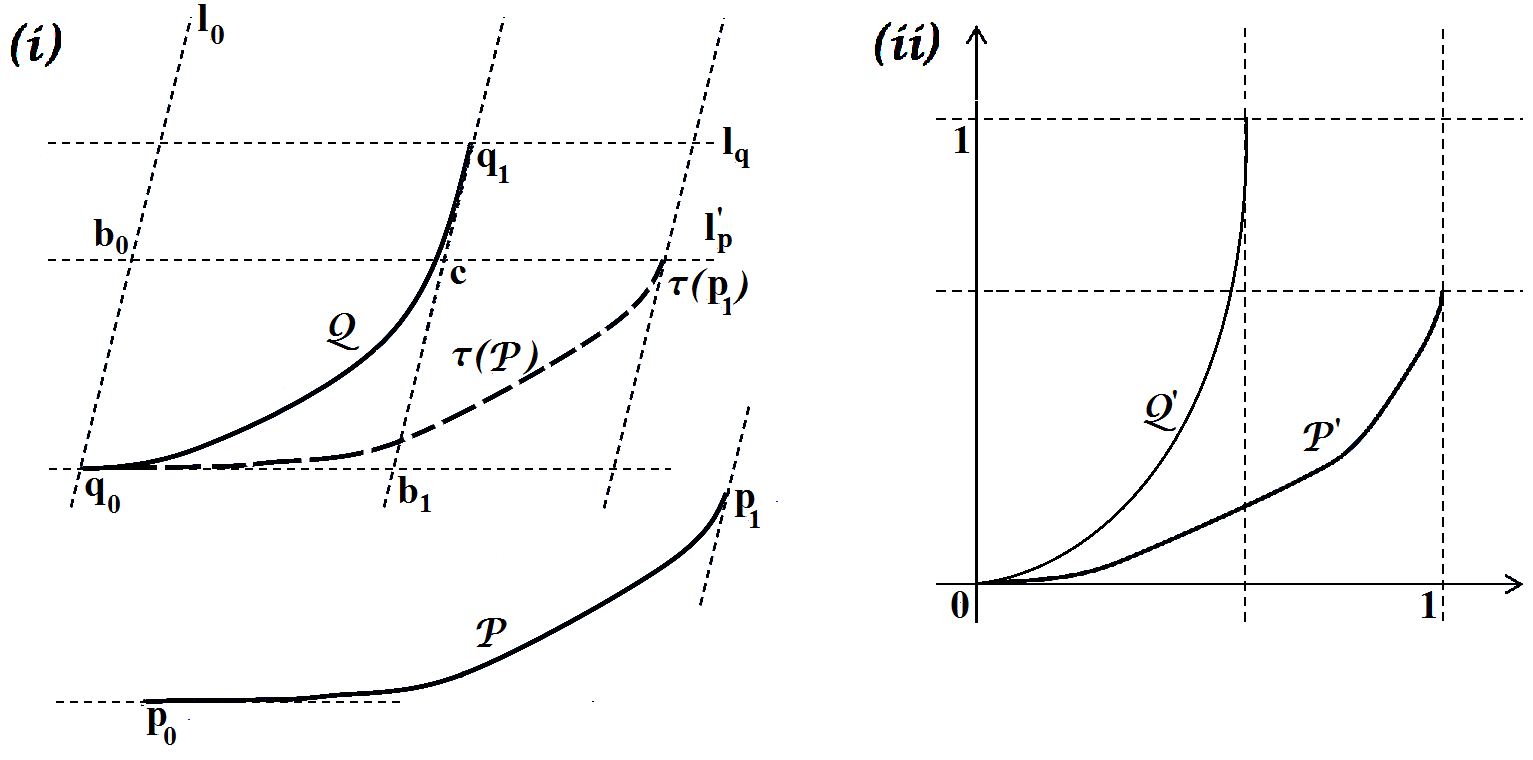}
\caption{Arcs satisfying assumptions of Theorem \ref{pqTheorem2}}
\label{PictureThmParallelogram}
\end{figure}

By the implicit function theorem we get that there exists a function $t:[0,L_{\mathcal{P}'}]\to [0,L_{\mathcal{Q}'}]$ such that 
\begin{align}
\label{EqParallel}
\frac{\mathrm{d}f}{\mathrm{d}s}(s)=\frac{\mathrm{d}g}{\mathrm{d}t}(t(s)).
\end{align}
This implies that $f(s),g(t(s))$ is a parallel pair. By (\ref{EqParallel}) we get that $\displaystyle t'(s)=\frac{\kappa_{\mathcal{P}'}(f(s))}{\kappa_{\mathcal{Q}'}(g(t(s)))}$.
Now from Proposition \ref{PropSingularPointOfBs} we have that the set $\SC(\mathcal{P}'\cup\mathcal{Q}')$ is singular if $\displaystyle\frac{\kappa_{\mathcal{P}'}(f(s))}{\kappa_{\mathcal{Q}'}(g(t(s)))}=1$ for some $s\in[0, L_{\mathcal{P}'}]$. Thus we need to show that $t'(s)=1$ for some $s\in[0,L_{\mathcal{P}'}]$. Let us assume that $t'(s)\neq 1$ for all $s\in[0,L_{\mathcal{P}'}]$. By (\ref{EqParallel}) we get that
\begin{align}\label{EquationGeGo}
g(t_e)-g(0)=\int_0^{L_{\mathcal{Q}'}}\frac{\mathrm{d}g}{\mathrm{d}t}(t)\,\mathrm{d}t=\int_0^{L_{\mathcal{P}'}}t'(s)\frac{\mathrm{d}g}{\mathrm{d}t}(t(s))\,\mathrm{d}s=\int_0^{L_{\mathcal{P}'}}t'(s)\frac{\mathrm{d}f}{\mathrm{d}s}(s)\,\mathrm{d}s.
\end{align}

Let us assume that $\displaystyle t'(s)>1$ for all $s\in[0, L_{\mathcal{P}'}]$. At the first component of (\ref{EquationGeGo}) we have 
\begin{align*}
g_1(L_{\mathcal{Q}'})=\int_0^{L_{\mathcal{P}'}}t'(s)\frac{\mathrm{d}f_1}{\mathrm{d}s}(s)\,\mathrm{d}s>\int_0^{L_{\mathcal{P}'}}1\cdot \frac{\mathrm{d}f_1}{\mathrm{d}s}(s)\,\mathrm{d}s=1.
\end{align*}
Then $g_1(L_{\mathcal{Q}'})>1$ which is impossible.

If we assume that $\displaystyle t'(s)<1$ for all $s\in[0,L_{\mathcal{P}'}]$, we obtain in a similar way that $1<f_2(L_{\mathcal{P}'})$, which is also impossible.

Therefore both $f(s)-g(t(s))$ and $g(t(s))-f(s)$ are singular points of  $\SC(\mathcal{P}'\cup\mathcal{Q}')$.
\end{proof}


\newcommand{\M}{M}
\newcommand{\C}{C}
\newcommand{\minm}{\mathbbm{m}}
\newcommand{\maxm}{\mathbb{M}}
\newcommand{\overarc}[2]{\underline{#1\frown #2}}
\newcommand{\p}{p}


\section{An algorithm to describe the geometry of the branches of the secant caustic}

Let $M$ be a generic regular closed curve. The secant caustic of $M$ is a union of smooth parametrized curves. Each of these curves we will be called a \textit{smooth branch} of $\SC( M)$. We illustrate a non-convex curve $M$ and different smooth branches of $\SC( M)$ in Fig. \ref{PictureAlgExam}.

\begin{figure}[h]
\centering
\includegraphics[scale=0.35]{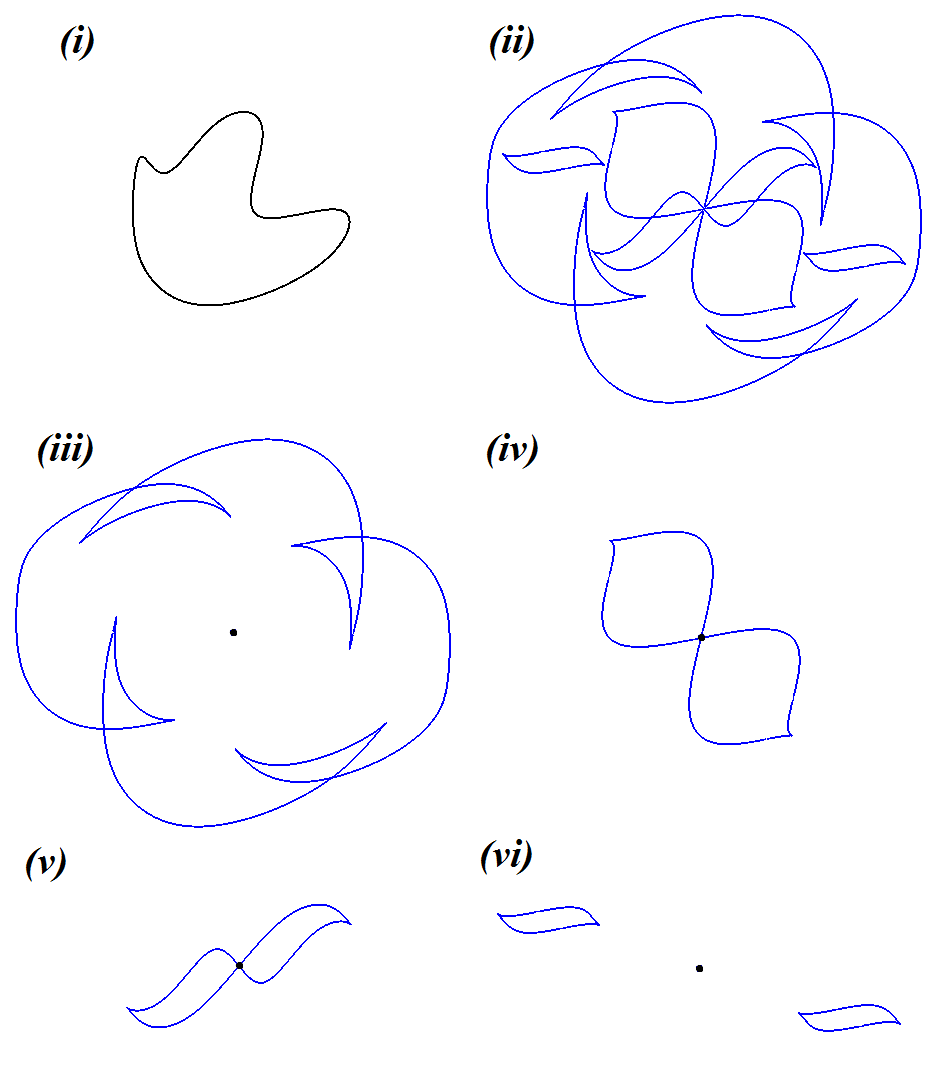}
\caption{(i) A non-convex curve $M$ with four inflexion points, (ii) $\SC(M)$, (iii) a smooth branch of $\SC(M)$, (iv)-(v) smooth branches of $\SC(M)$ passing through the origin (the marked point), (vi) two smooth branches of $\SC(M)$}
\label{PictureAlgExam}
\end{figure}

We assume along this section that $M$ is a generic regular closed curve. We shall adapt an algorithm that describes the geometry of smooth branches of affine equidistants  to the case of the secant caustic (see Section 3 in \cite{D-Z} for details).

\begin{defn}\label{DefAngleFunctin}
Let $S^1\ni s\mapsto f(s)\in\mathbb{R}^2$ be a parameterization of a smooth closed curve $\M$, such that $f(0)$ is not an inflexion point. A function $\varphi_{\M}:S^1\to [0,\pi]$ is called an \textit{angle function of} $\M$ if $\varphi_{\M}(s)$ is the oriented angle between $f'(s)$ and  $f'(0)$ modulo $\pi$. \change{In this sense the set $[0,\pi]$ is $S^1$ with identification modulo $\pi$.}
\end{defn}

\begin{defn}\label{DefLocalExtrema}
A point $\varphi$ in $S^1$ is a \textit{local extremum }of $\varphi_{\M}$ if there exists $s$ in $S^1$ such that $\varphi_{\M}(s)=\varphi$, $\varphi'_{\M}(s)=0$, $\varphi''_{\M}(s)\neq 0$. The local extremum $\varphi$ of $\varphi_{\M}$ is a \textit{local maximum (respectively minimum)} if $\varphi''_{\M}(s)<0$ (respectively $\varphi''_{\M}(s)>0$). We denote by $\mathcal{M}(\varphi_{\M})$ the set of local extrema of $\varphi_{\M}$.  
\end{defn}

\begin{figure}[h]
\centering
\includegraphics[scale=0.33]{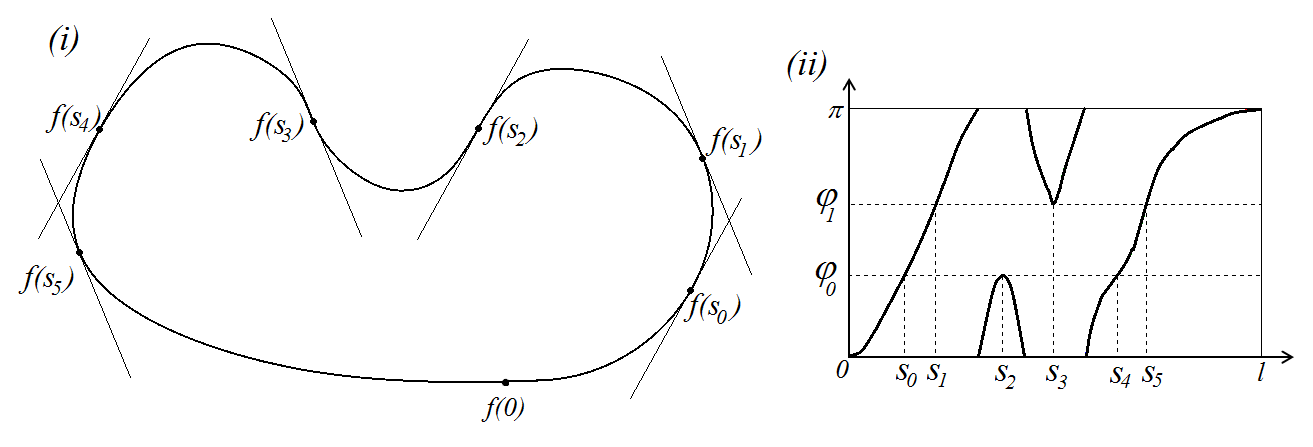}
\caption{(i) A closed regular curve $\M$ with points $f(s_i)$ and tangent lines to $\M$ at these points, (ii) a graph of the angle function $\varphi_{\M}$ with $\varphi_i$ and $s_i$ values}
\label{PictureAlgFindingPhiSets}
\end{figure}

It is easy to see that if $f$ is the arc length parameterization of a generic regular closed curve $M$ then $f(s_1)$, $f(s_2)$ is a parallel pair of $M$ if and only if $\varphi_M(s_1)=\varphi_M(s_2)$. Furthermore $M$ has an inflexion point at $f(s_0)$ if and only if $\varphi_M(s_0)$ is a local extremum. The function $\varphi_M$ has an even number of local extrema. After the local maximum the next extremum is a local minimum, and vice versa. 

\begin{defn}\label{DefSeqOfLocExtr}
The set of local extrema $\mathcal{M}(\varphi_{\M})= \{\varphi_0, \varphi_1, \ldots, \varphi_{2n-1}\}$ with the order compatible with the orientation of $S^1=\varphi_{\M}(S^1)$ will be called a \textit{sequence of local extrema}. 
\end{defn}

\begin{defn}\label{DefSeqOfParallPts}
The \textit{sequence $\mathcal{S}_{\M}$ of parallel points} is defined as the subset \linebreak $ \varphi_{\M}^{-1}\left(\mathcal{M}(\varphi_{\M})\right)= \{s_0, s_1, \ldots, s_{m-1}\}$, 
 provided $\mathcal{M}(\varphi_{\M})$ is not empty; otherwise we shall take $\{s_0, s_1, \ldots, s_{m-1}\}=\varphi_{\M}^{-1}\left(\varphi_{\M}(0)\right)$. The  points of $\mathcal{S}_{\M}$ are ordered according to the orientation of $\M$.   
\end{defn}

In Fig. \ref{PictureAlgFindingPhiSets} we illustrate an example of a closed regular curve $\M$ with corresponding angle function $\varphi_{\M}$.

\medskip
We observe that the number of elements in the sequence $\mathcal{S}_M$ is even (see Proposition 3.7 in \cite{D-Z}). For the remaining part of this section we set $2m=\#\mathcal{S}_{\M}$.

We define
\begin{align*}
\minm_{2m}(k,l):&=\left\{\begin{array}{ll}2m-1, &\text{ if }\{k,l\}=\{0,2m-1\},\\ \min(k,l), &\text{ otherwise},\end{array}\right.\\
\maxm_{2m}(k,l):&=\left\{\begin{array}{ll}0, &\text{ if }\{k,l\}=\{0,2m-1\},\\ \max(k,l), &\text{ otherwise}.\end{array}\right.
\end{align*}

An interval $(s_{2m-1},s_0)$ denotes the interval $(s_{2m-1}, L_{\M}+s_0)$, where $L_{\M}$ is the length of $M$.

\medskip
In the following definition the indexes $i$ in $\varphi_i$ are computed modulo $2n$ and the numbers $j, j+1$ in the pairs $(j,j+1)$ and $(j+1,j)$ are computed modulo $2m$.

\begin{defn}\label{DefSetParallArcs}
If $\mathcal{M}(\varphi_{\M})=\{\varphi_0, \varphi_1, \ldots, \varphi_{2n-1}\}$, then for every $i\in\{0, 1, \ldots, 2n-1\}$, a \textit{set of parallel arcs} $\Phi_i$ is the following subset
\begin{align*}
\Phi_i=\Big\{\overarc{p_k}{p_l}\ \big|\ &k-l=\pm 1\ \mbox{mod} (m),\ \varphi_{\M}(s_k)=\varphi_i,\ \varphi_{\M}(s_l)=\varphi_{i+1}, \\ &\varphi_{\M}\big((s_{\minm_{2m}(k,l)}, s_{\maxm_{2m}(k,l)})\big)=(\varphi_i, \varphi_{i+1})\Big\},
\end{align*}
where $p_i:=f(s_i)$ and $\overarc{p_k}{p_l}:=f\left(\big[s_{\minm_{2m}(k,l)}, s_{\maxm_{2m}(k,l)}\big]\right)$.

\medskip
If $\mathcal{M}(\varphi_{\M})$ is empty then we define only one \textit{set of parallel arcs} as follows: 
\begin{align*}
\Phi_0=\big\{\overarc{p_0}{p_1}, \overarc{p_1}{p_2}, \ldots, \overarc{p_{2m-2}}{p_{2m-1}}, \overarc{p_{2m-1}}{p_0}\big\}.
\end{align*}
\end{defn}

The set of parallel arcs has the following property.

\begin{prop}\label{PropDiffeoBetweenParallelArcs}
Let $f:S^1\to\mathbb{R}^2$ be the arc length parameterization of $\M$.
For every two arcs $\overarc{p_k}{p_l}$, $\overarc{p_{k'}}{p_{l'}}$ in the same set of parallel arcs, the well defined map
\begin{align*}
\overarc{p_k}{p_l}\ni p\mapsto P(p)\in \overarc{p_{k'}}{p_{l'}},
\end{align*}
is a diffeomorphism, where the pair $p, P(p)$ is a parallel pair of $\M$.
\end{prop}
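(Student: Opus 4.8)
The plan is to exhibit the map $P$ explicitly in terms of the angle function and then verify smoothness and bijectivity using the structure of a set of parallel arcs. Recall that by the remarks following Definition \ref{DefLocalExtrema}, two points $f(\sigma)$, $f(\sigma')$ form a parallel pair precisely when $\varphi_{\M}(\sigma)=\varphi_{\M}(\sigma')$. So the natural candidate for $P$ is the composition $P=f\circ\varphi_{\M}^{-1}\big|_{\overarc{p_{k'}}{p_{l'}}}\circ\,\varphi_{\M}\big|_{\overarc{p_k}{p_l}}$, where on each arc $\varphi_{\M}$ is restricted to the corresponding monotone piece. This is exactly what makes the map ``well defined'': given $p=f(\sigma)$ with $\sigma$ in the parameter interval of $\overarc{p_k}{p_l}$, set $P(p)=f(\sigma')$, where $\sigma'$ is the unique parameter in the interval of $\overarc{p_{k'}}{p_{l'}}$ with $\varphi_{\M}(\sigma')=\varphi_{\M}(\sigma)$.

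First I would justify that $\varphi_{\M}$ is strictly monotone on the parameter interval of each arc in $\Phi_i$. By the very definition of $\Phi_i$ in Definition \ref{DefSetParallArcs}, each arc $\overarc{p_k}{p_l}$ satisfies $\varphi_{\M}\big((s_{\minm_{2m}(k,l)}, s_{\maxm_{2m}(k,l)})\big)=(\varphi_i, \varphi_{i+1})$, and its endpoints correspond to consecutive local extrema $\varphi_i$, $\varphi_{i+1}$ of $\varphi_{\M}$. Since there are no local extrema of $\varphi_{\M}$ strictly between consecutive elements of the sequence of local extrema, the derivative $\varphi'_{\M}$ does not vanish on the open parameter interval, so $\varphi_{\M}$ restricts to a strictly monotone diffeomorphism from that interval onto the open interval with endpoints $\varphi_i$ and $\varphi_{i+1}$. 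Because all arcs in the same set $\Phi_i$ share the same image interval $(\varphi_i,\varphi_{i+1})$ of $\varphi_{\M}$, the restrictions $\varphi_{\M}\big|_{\overarc{p_k}{p_l}}$ and $\varphi_{\M}\big|_{\overarc{p_{k'}}{p_{l'}}}$ are diffeomorphisms onto the \emph{same} interval.

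Next I would assemble the diffeomorphism. Writing $\psi_{kl}$ for the restriction of $\varphi_{\M}$ to the parameter interval of $\overarc{p_k}{p_l}$, the map $P$ is realized at the parameter level by $\sigma\mapsto \psi_{k'l'}^{-1}\!\big(\psi_{kl}(\sigma)\big)$, a composition of two diffeomorphisms between open intervals (extended smoothly to the closed intervals, matching endpoints to endpoints), hence itself a diffeomorphism. Composing with $f$, which is a regular parameterization and an embedding on each arc, shows that $P:\overarc{p_k}{p_l}\to\overarc{p_{k'}}{p_{l'}}$ is a diffeomorphism. That $P(p)$ and $p$ form a parallel pair is immediate: they have equal values of $\varphi_{\M}$ by construction, and equal $\varphi_{\M}$ is equivalent to being a parallel pair.

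The main subtlety, and the step I would treat most carefully, is the behaviour at the endpoints, i.e. at the inflexion points of $\M$ (where $\varphi_{\M}$ attains a local extremum $\varphi_i$ or $\varphi_{i+1}$). At such a point $\varphi'_{\M}$ vanishes, so $\psi_{kl}$ is not a local diffeomorphism there and one cannot simply invoke the inverse function theorem. I would handle this by working with the genericity hypothesis: at a generic inflexion point $\varphi''_{\M}\neq 0$ (Definition \ref{DefLocalExtrema}), so near such an endpoint $\varphi_{\M}$ behaves to leading order like a nondegenerate quadratic, and the two arcs meeting at a common extremum do so with the same local quadratic model. Consequently the composition $\psi_{k'l'}^{-1}\circ\psi_{kl}$ extends smoothly across the endpoint because the square-root singularities introduced by inverting each degenerate map cancel, leaving a smooth (in fact nonvanishing-derivative) transition. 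This is the only place where genericity is genuinely used, and it is what guarantees $P$ is a diffeomorphism of the \emph{closed} arcs rather than merely of their interiors.
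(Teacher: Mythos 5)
The paper states this proposition without proof (it is carried over from Proposition 4.9 of \cite{D-Z}), so there is no in-text argument to compare against; but your construction is certainly the intended one: realize $P$ at the parameter level as $\psi_{k'l'}^{-1}\circ\psi_{kl}$, where $\psi_{kl}$, $\psi_{k'l'}$ are the restrictions of the angle function to the two parameter intervals, each strictly monotone onto the common interval $(\varphi_i,\varphi_{i+1})$ because the open intervals contain no point of $\mathcal{S}_{\M}$ and hence, by nondegeneracy of the extrema of $\varphi_{\M}$ for a generic curve, no critical point. Everything you say about well-definedness, the parallel-pair property, and the interiors of the arcs is correct.

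Your endpoint analysis, however, contains a genuine error. You identify the endpoints of the arcs with inflexion points of $\M$ and assert that $\varphi'_{\M}$ vanishes there and that matched endpoints share ``the same local quadratic model''. But the endpoints are the elements of $\mathcal{S}_{\M}=\varphi_{\M}^{-1}\left(\mathcal{M}(\varphi_{\M})\right)$, i.e.\ \emph{all} preimages of the local extreme values; generically exactly one preimage of $\varphi_i$ is a critical point of $\varphi_{\M}$ (the inflexion point), and at every other preimage $\varphi'_{\M}\neq 0$. Your cancellation argument handles the case where both matched endpoints coincide with that inflexion point (the smooth Morse involution), and the case of two regular endpoints is trivial, but it misses the mixed case: one arc of $\Phi_i$ ending at the inflexion point $f(s_k)$, where $\psi_{kl}(\sigma)=\varphi_i+c(\sigma-s_k)^2+\cdots$, paired with another arc ending at a regular preimage $f(s_{k'})$, where $\psi_{k'l'}(\sigma')=\varphi_i+d(\sigma'-s_{k'})+\cdots$ with $d\neq0$. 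This occurs already for the simplest non-convex curve (there $\#\Phi_i\geqslant 3$, so such pairs exist). In that case the transition map is $\sigma\mapsto s_{k'}+\tfrac{c}{d}(\sigma-s_k)^2+\cdots$, whose derivative vanishes at the endpoint, and whose inverse is a square root and not $C^1$; the singularities do not cancel. You therefore need either to restrict the conclusion to the open arcs (which is all that Theorem \ref{EqAsSumThm} requires, the endpoints being recovered by taking closures), or to state explicitly in what weakened sense ``diffeomorphism'' is to be understood at such endpoints; as written, the claim that the degeneracies always cancel at the boundary is false.
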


As a consequence, we obtain the following result.

\begin{thm}\label{EqAsSumThm}
If $f:S^1\to\mathbb{R}^2$ is the arc length parameterization of $M$ then 
\begin{align}\label{EqUnionOfArcs}
\SC( M)=\bigcup_{i}\bigcup_{\substack{\overarc{p_k}{p_l}, \overarc{p_{k'}}{p_{l'}}\in\Phi_i \\ \overarc{p_k}{p_l}\neq\overarc{p_{k'}}{p_{l'}}}}\SC\left(\overarc{p_k}{p_l}\cup\overarc{p_{k'}}{p_{l'}}\right).
\end{align}
\end{thm}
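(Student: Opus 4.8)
The plan is to prove the two inclusions of \eqref{EqUnionOfArcs} separately, using throughout the dictionary recorded after Definition \ref{DefLocalExtrema}: two points $f(s_1),f(s_2)$ form a parallel pair if and only if $\varphi_M(s_1)=\varphi_M(s_2)$, and $\varphi_M$ is strictly monotone on each open arc of $S^1\setminus\mathcal{S}_M$ (its only critical points are the inflexion points, which lie in $\mathcal{S}_M$). I would first observe that the right-hand side of \eqref{EqUnionOfArcs} is a \emph{finite} union and that each summand $\SC\big(\overarc{p_k}{p_l}\cup\overarc{p_{k'}}{p_{l'}}\big)$ is closed, since the definition of the secant caustic already takes a closure; hence the right-hand side is closed. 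Consequently it suffices to match the two \emph{generating} sets of differences $a-b$ of parallel pairs and then pass to closures on both sides.

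For the inclusion $\supseteq$, fix $i$ and two distinct arcs $A=\overarc{p_k}{p_l}$, $B=\overarc{p_{k'}}{p_{l'}}$ in $\Phi_i$. Because $\varphi_M$ is strictly monotone on each arc, no two distinct points of $A$ (respectively of $B$) can be a parallel pair, so every parallel pair of $A\cup B$ has exactly one point on each arc. By Proposition \ref{PropDiffeoBetweenParallelArcs} these are precisely the pairs $\p,P(\p)$ with $\p\in A$, and each such pair is a parallel pair of $M$. Therefore every difference generating $\SC(A\cup B)$ has the form $\p-P(\p)$ (or $P(\p)-\p$) with $\p,P(\p)$ a parallel pair of $M$, whence $\SC(A\cup B)\subseteq\SC(M)$; taking the union over all $i$ and all admissible arc pairs gives $\supseteq$.

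For the reverse inclusion I would start from an arbitrary parallel pair $a=f(s)$, $b=f(\tilde s)$ of $M$ and locate $s$ and $\tilde s$ in closed arcs $A$ and $B$ of the decomposition of $S^1$ induced by $\mathcal{S}_M$; since $a\neq b$ and $\varphi_M$ is strictly monotone on each arc, necessarily $A\neq B$. The heart of the matter is the claim that $A$ and $B$ belong to one and the same $\Phi_i$. To see this I would argue that, as $\varphi_M$ is strictly monotone on each arc, sends both endpoints into $\mathcal{M}(\varphi_M)$, and takes no local-extremum value in the arc's interior (all preimages of extremum values lie in $\mathcal{S}_M$), the value range of each arc is exactly the open interval between two extremum values that are consecutive on the oriented circle $\varphi_M(S^1)$; these intervals tile $\varphi_M(S^1)$. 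Since $\varphi_M(s)=\varphi_M(\tilde s)$ lies in a single such interval, $A$ and $B$ share the same value range and hence are members of a common $\Phi_i$. Granting the claim, $a$ and $b$ correspond under the diffeomorphism of Proposition \ref{PropDiffeoBetweenParallelArcs}, so $a-b\in\SC(A\cup B)$, which lies inside the right-hand side of \eqref{EqUnionOfArcs}. Thus the generating set of $\SC(M)$ is contained in the (closed) right-hand side, and taking closures yields $\SC(M)\subseteq\bigcup_i\bigcup\SC\big(\overarc{p_k}{p_l}\cup\overarc{p_{k'}}{p_{l'}}\big)$. The closure also absorbs the limit difference $\mathbf{0}$ arising at inflexion points, since these correspond to endpoints $s_k\in\mathcal{S}_M$ shared by adjacent arcs and the right-hand side is closed.

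The step I expect to be the main obstacle is the claim in the previous paragraph, that the two arcs carrying a given parallel pair lie in a common $\Phi_i$. This is precisely the combinatorial content of Definition \ref{DefSetParallArcs}, and verifying it rigorously requires controlling the modular index conventions ($\minm_{2m}$, $\maxm_{2m}$ and the condition $k-l=\pm1\bmod m$) together with the alternation of local maxima and minima, so that the value ranges of the arcs genuinely partition the circle of directions without overlap. I would carry this out by following the corresponding argument for affine equidistants in \cite{D-Z}, treating separately the degenerate case $\mathcal{M}(\varphi_M)=\varnothing$, in which there is a single set of parallel arcs $\Phi_0$ and the decomposition \eqref{EqUnionOfArcs} reduces to one family of arc pairs covering all of $M$.
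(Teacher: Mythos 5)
Your proposal is correct and follows essentially the same route as the paper, which in fact states Theorem \ref{EqAsSumThm} without proof as a direct consequence of Proposition \ref{PropDiffeoBetweenParallelArcs}. Your two-inclusion argument --- in particular the verification that the two arcs carrying a given parallel pair lie in a common $\Phi_i$ because their angle-function images are the same interval between consecutive local extrema --- is precisely the content the paper leaves implicit, so there is nothing to object to beyond the combinatorial bookkeeping you already flag and correctly defer to the corresponding argument in \cite{D-Z}.
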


\begin{defn}\label{DefGlueingScheme}
Let $\overarc{p_{k_1}}{p_{k_2}}$, $\overarc{p_{l_1}}{p_{l_2}}$ belong to the same set of parallel arcs, then $\begin{array}{ccc}
p_{k_1} &\frown& p_{k_2} \\ \hline
p_{l_1} &\frown&p_{l_2} \\ \hline \end{array}$ denotes the following arc 
\begin{align*}
\textrm{cl}\Big\{(a, b)\in M\times M\ \Big|\ &a\in \overarc{p_{k_1}}{p_{k_2}}, b\in\overarc{p_{l_1}}{p_{l_2}}, a,b\text{ is a parallel pair of }\M\Big\}.
\end{align*}

In addition $\begin{array}{ccccc}
p_{k_1} &\frown &\ldots &\frown & p_{k_n}\\ \hline
p_{l_1} &\frown &\ldots &\frown& p_{l_n}\\ \hline \end{array}$ denotes $\displaystyle\bigcup_{i=1}^{n-1}\begin{array}{ccc} 
p_{k_i} &\frown &p_{k_{i+1}} \\ \hline
p_{l_i} &\frown &p_{l_{i+1}} \\ \hline \end{array}$. We will call this union of arcs a \textit{glueing scheme} for $\SC(\M)$.
\end{defn}

\begin{defn}
The \textit{secant map} of the curve $M$ is the following map:
\begin{align*}
S_M: M\times M\to\mathbb{R}^2, (a, b)\mapsto a-b.
\end{align*}
\end{defn}

Let $\mathcal{A}_1=\overarc{\p_{k_1}}{\p_{k_2}}$ and $\mathcal{A}_2=\overarc{\p_{l_1}}{\p_{l_2}}$ be two arcs of $M$ which belong to the same set of parallel arcs. It is easy to see that the set $\SC\Big(\mathcal{A}_1\cup\mathcal{A}_2\Big)$ consists of the image of two different arcs $\begin{array}{ccc}
\p_{k_1} &\frown&\p_{k_2} \\ \hline
\p_{l_1} &\frown&\p_{l_2} \\ \hline \end{array}$ and $\begin{array}{ccc}
\p_{l_1} &\frown&\p_{l_2} \\ \hline 
\p_{k_1} &\frown&\p_{k_2} \\ \hline
\end{array}$ under the secant map $S_M$. From this observation we get the following proposition.

\begin{cor}\label{CorNumDiffArcs}
The set $\SC(\M)$ is the image of the union of $\displaystyle 2\cdot\sum_{i}{\#\Phi_i\choose 2}$ different arcs under the secant map $S_M$.
\end{cor}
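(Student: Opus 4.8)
The plan is to deduce this as a direct counting consequence of Theorem \ref{EqAsSumThm} together with the observation recorded immediately above the statement. First I would invoke Theorem \ref{EqAsSumThm} to write $\SC(\M)$ as the union, taken over all indices $i$ and over all unordered pairs of distinct arcs $\overarc{p_k}{p_l}\neq\overarc{p_{k'}}{p_{l'}}$ lying in the same set of parallel arcs $\Phi_i$, of the pieces $\SC\big(\overarc{p_k}{p_l}\cup\overarc{p_{k'}}{p_{l'}}\big)$. This already exhibits $\SC(\M)$ as a finite union of two-arc secant caustics, so the task reduces to counting how many arcs in $\M\times\M$ map onto this union under the secant map $S_\M$.

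Next I would use the observation that for any two distinct arcs $\mathcal{A}_1,\mathcal{A}_2$ belonging to the same $\Phi_i$, the set $\SC(\mathcal{A}_1\cup\mathcal{A}_2)$ is exactly the image under $S_\M$ of the two glueing schemes obtained by placing $\mathcal{A}_1$ on top with $\mathcal{A}_2$ on the bottom, and vice versa. Thus each unordered pair of distinct arcs in $\Phi_i$ contributes precisely two arcs in the domain $\M\times\M$. Since the number of unordered pairs of distinct arcs inside $\Phi_i$ is $\binom{\#\Phi_i}{2}$, summing over $i$ produces a total of $2\cdot\sum_i\binom{\#\Phi_i}{2}$ glueing-scheme arcs whose images cover $\SC(\M)$.

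The main point to check — and essentially the only place requiring care — is that these $2\cdot\sum_i\binom{\#\Phi_i}{2}$ arcs are genuinely pairwise different as subsets of $\M\times\M$, so that the count is exact and not merely an upper bound. For this I would argue that each glueing scheme $\begin{array}{ccc} p_{k_1} & \frown & p_{k_2} \\ \hline p_{l_1} & \frown & p_{l_2} \\ \hline \end{array}$ is determined by the \emph{ordered} pair of arcs $\big(\overarc{p_{k_1}}{p_{k_2}},\overarc{p_{l_1}}{p_{l_2}}\big)$, since by Definition \ref{DefGlueingScheme} it consists of the parallel pairs $(a,b)$ with $a$ in the first arc and $b$ in the second. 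The two schemes arising from a single unordered pair differ because they interchange the roles of $a$ and $b$ (one mapping to $a-b$, the other to $b-a$), so as subsets of $\M\times\M$ they are distinct whenever $\mathcal{A}_1\neq\mathcal{A}_2$. Moreover, arcs belonging to different sets of parallel arcs $\Phi_i,\Phi_j$ with $i\neq j$ are themselves distinct sub-arcs of $\M$, since by Definition \ref{DefSetParallArcs} they are delimited by parallel points whose angle-function values are the consecutive extrema $\varphi_i,\varphi_{i+1}$; hence no glueing scheme is counted twice across different indices. Combining this pairwise distinctness with the covering identity of Theorem \ref{EqAsSumThm} completes the proof.
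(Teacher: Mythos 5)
Your proposal is correct and follows essentially the same route as the paper, which derives the count directly from Theorem \ref{EqAsSumThm} together with the observation (stated just before the corollary) that each unordered pair of distinct arcs in a set of parallel arcs $\Phi_i$ contributes exactly two arcs under the secant map, giving $2\cdot\sum_i\binom{\#\Phi_i}{2}$ in total. The paper leaves the pairwise-distinctness of these arcs implicit, so your explicit verification of that point is a reasonable addition but not a departure from the argument.
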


The algorithm glues arcs of $\SC( M)$ corresponding to pairs of parallel arcs of $M$ in order to create branches of $\SC( M)$. 

\begin{prop}[see Proposition 3.15 in \cite{D-Z}]\label{PropAlgAlwaysGoFurhter}
Let $M$ be a generic regular closed curve which is not convex. If a glueing scheme for $\SC( M)$ is of the form \linebreak $\begin{array}{ccc} 
p_{k_1} &\frown &p_{k_2} \\ \hline
p_{l_1} &\frown &p_{l_2} \\ \hline \end{array}$, then this scheme can be prolonged in a unique way to \begin{align*}\begin{array}{ccccc} 
p_{k_1} &\frown &p_{k_2} &\frown &p_{k_3}\\ \hline
p_{l_1} &\frown &p_{l_2} &\frown &p_{l_3}\\ \hline \end{array}\end{align*}
such that the pair $(k_1,l_1)$ is different than the pair $(k_3,l_3)$.
\end{prop}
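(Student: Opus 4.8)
The plan is to localize the whole question near the common right endpoint of the two arcs of the given block. By Definition \ref{DefSetParallArcs} both arcs $\overarc{p_{k_1}}{p_{k_2}}$ and $\overarc{p_{l_1}}{p_{l_2}}$ lie in one set of parallel arcs $\Phi_i$, so their endpoints $p_{k_2}=f(s_{k_2})$ and $p_{l_2}=f(s_{l_2})$ both carry the same extremal value, say $\varphi_{i+1}$, of the angle function $\varphi_{M}$. I would rephrase everything through the parallel-pair set $\mathcal{C}=\{(s,u)\ |\ \varphi_{M}(s)=\varphi_{M}(u)\}$ near $(s_{k_2},s_{l_2})$: by Proposition \ref{PropDiffeoBetweenParallelArcs} the given block is exactly one branch of $\mathcal{C}$ arriving at this point, and a prolongation is exactly a choice of branch of $\mathcal{C}$ leaving it. So the strategy is to show that $\mathcal{C}$ is a smooth embedded arc through $(s_{k_2},s_{l_2})$, whence there is precisely one outgoing branch distinct from the incoming one, and then to read off from the local shape of $\varphi_{M}$ which arcs $\overarc{p_{k_2}}{p_{k_3}}$, $\overarc{p_{l_2}}{p_{l_3}}$ that branch sweeps out.

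The first step is the smoothness of $\mathcal{C}$. Here I would use genericity of $M$: the critical value $\varphi_{i+1}$ of $\varphi_{M}$ is attained at a unique inflexion, so at most one of $p_{k_2}, p_{l_2}$ is an inflexion of $M$. Hence the gradient $(\varphi_{M}'(s_{k_2}),-\varphi_{M}'(s_{l_2}))$ of $(s,u)\mapsto \varphi_{M}(s)-\varphi_{M}(u)$ is nonzero at $(s_{k_2},s_{l_2})$, and the implicit function theorem presents $\mathcal{C}$ locally as a graph over whichever variable has nonvanishing derivative. This already gives both existence and uniqueness: a smooth arc through a point has exactly two ends, one of which is the incoming block, so exactly one continuation remains, and it is automatically non-backtracking, i.e. forced to satisfy $(k_3,l_3)\neq(k_1,l_1)$ because it is the end of $\mathcal{C}$ opposite to the incoming one.

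The second step identifies the continuation explicitly by a short case split. If neither $p_{k_2}$ nor $p_{l_2}$ is an inflexion, then $\varphi_{M}$ crosses $\varphi_{i+1}$ transversally at both, the angle keeps increasing past $\varphi_{i+1}$ on each arc, and the continuation enters the next set of parallel arcs, giving $\overarc{p_{k_2}}{p_{k_3}},\overarc{p_{l_2}}{p_{l_3}}\in\Phi_{i+1}$. If instead $p_{k_2}$ is the inflexion (a local maximum of $\varphi_{M}$ at value $\varphi_{i+1}$), I would write $\mathcal{C}$ as $u=h(s)$ near $s_{k_2}$ and note that $\varphi_{M}'(h(s))h'(s)=\varphi_{M}'(s)$ forces $h'(s_{k_2})=0$ with a maximum there: as $s$ crosses $s_{k_2}$ the top point moves through the inflexion onto the adjacent arc of $\Phi_i$, while the bottom point rises to $p_{l_2}$ and descends again along the very same arc $\overarc{p_{l_1}}{p_{l_2}}$. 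Thus the continuation stays in $\Phi_i$, producing $\overarc{p_{k_2}}{p_{k_3}}$ and $\overarc{p_{l_2}}{p_{l_1}}$, so that $l_3=l_1$ while $k_3\neq k_1$. In every case the two new arcs lie in one common set of parallel arcs and form a legitimate block with $(k_3,l_3)\neq(k_1,l_1)$.

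The main obstacle, and the step I would treat most carefully, is exactly this inflexion case. The naive reading that one should ``continue each arc past its endpoint'' independently fails, because the arc through the inflexion would turn back into $\Phi_i$ while the regular arc would run forward into $\Phi_{i+1}$, and these do not form a block. The resolution is that the parallel constraint couples the two motions: it forces the regular endpoint to reverse along its own arc rather than cross into $\Phi_{i+1}$, and verifying this coupling rigorously via the graph $u=h(s)$ and the sign of $\varphi_{M}'$ is the technical heart of the argument. I would also make explicit the genericity hypotheses used for the smoothness of $\mathcal{C}$, namely that distinct inflexions have non-parallel tangents and that all extrema of $\varphi_{M}$ are nondegenerate, which together guarantee the nonvanishing gradient; the assumption that $M$ is not convex is what ensures $\mathcal{M}(\varphi_{M})\neq\emptyset$ so that this set-of-parallel-arcs structure is the relevant one.
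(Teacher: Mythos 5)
Your strategy---recasting the prolongation as the continuation of the level set $\mathcal{C}=\{(s,u)\ :\ \varphi_{M}(s)=\varphi_{M}(u)\}$ through $(s_{k_2},s_{l_2})$ and splitting into cases according to the presence of an inflexion at the terminal points---is the natural one, and the two cases you treat (no inflexion; an inflexion at exactly one of two \emph{distinct} terminal points) are handled correctly, including the coupling that forces the regular partner to reverse along its own arc rather than run forward into the next set of parallel arcs. The paper gives no proof of this proposition (it defers to Proposition 4.15 of \cite{D-Z}), so there is nothing to compare with line by line, but your argument as written has one genuine gap.

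The missing case is the block whose two arcs share their terminal endpoint at an inflexion point: $p_{k_2}=p_{l_2}=p_l$ with $s_{k_2}=s_{l_2}=s_l$ and $\varphi'_{M}(s_l)=0$. This is a legitimate block, since the arcs of $M$ immediately before and immediately after an inflexion point are two \emph{different} arcs of the same set of parallel arcs sharing the endpoint $p_l$; moreover it is exactly the block through which every maximal glueing scheme of Lemma \ref{LemPropMaxGlueSchemes}(ii) must be prolonged (the turning point $p_l$ of the branches passing through the origin), so it cannot be dismissed. At $(s_l,s_l)$ \emph{both} partial derivatives of $F(s,u)=\varphi_{M}(s)-\varphi_{M}(u)$ vanish, so your inference that ``at most one of $p_{k_2},p_{l_2}$ is an inflexion, hence the gradient is nonzero'' fails, the implicit function theorem does not apply, and $\mathcal{C}$ is locally \emph{not} a smooth arc: since $s_l$ is a nondegenerate critical point of $\varphi_{M}$, the point $(s_l,s_l)$ is a Morse saddle of $F$, and $\mathcal{C}$ is there the transverse crossing of the diagonal $\{u=s\}$ with the graph of the involution $\sigma$ exchanging the two local $\varphi_{M}$-preimages near the extremum. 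The repair is short but must be made explicit: the diagonal component is the degenerate part of the singular set (its image under the secant map is the origin, and it never occurs in a glueing scheme because the two arcs of a block are different), the incoming block lies on the branch $\{u=\sigma(s)\}$, and that branch is itself a smooth arc through $(s_l,s_l)$ whose opposite end sweeps the swapped pair of arcs; this yields the unique prolongation with $(k_3,l_3)=(l_1,k_1)\neq(k_1,l_1)$. With this third case added, your proof is complete.
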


The glueing scheme represents parts of branches of $\SC( M)$. If we equip the set of all possible glueing schemes for $\SC( M)$ with the inclusion relation, then this set is partially ordered. The maximal glueing schemes for the secant caustic are the same as the maximal glueing schemes for affine $\lambda$-equidistants for $\lambda\neq 0, \frac{1}{2}, 1$. Thus we define them in the following way.

\begin{defn}
A \textit{maximal glueing scheme} for $\SC( M)$ is a glueing scheme which is a maximal element of the set of all glueing schemes for $\SC( M)$ equipped with the inclusion relation.
\end{defn}

\begin{rem}\label{RemGlueSchemeIsBranch}
Every maximal glueing scheme corresponds to a branch of $\SC( M)$.
\end{rem}

\begin{lem}[see Lemma 3.20 in \cite{D-Z}]\label{LemPropMaxGlueSchemes}
Let $f:S^1\mapsto\mathbb{R}^2$ be the arc length parameterization of $M$. Then
\begin{enumerate}[(i)]
\item for every two different arcs $\overarc{p_{k_1}}{p_{k_2}}$, $\overarc{p_{l_1}}{p_{l_2}}$ in $\Phi_i$ there exists exactly one maximal glueing scheme for $\SC( M)$ containing $\begin{array}{ccc}
p_{k_1} &\frown &p_{k_2} \\ \hline
p_{l_1} &\frown &p_{l_2} \\ \hline \end{array}$ or  \linebreak $\begin{array}{ccc} 
p_{k_2} &\frown &p_{k_1} \\ \hline
p_{l_2} &\frown &p_{l_1} \\ \hline \end{array}$,
\medskip
\item if $p_k:=f(s_k)$ is an inflexion point of $\M$, then there exists a maximal glueing scheme for $\SC( M)$ which is in the form 
\begin{align*}
\begin{array}{ccccccccccccccccc}
p_k & \frown & p_{k_1} & \frown & \ldots & \frown & p_{k_n} & \frown & p_l & \frown & p_{l_n} & \frown & \ldots & \frown & p_{l_1} & \frown & p_k \\ \hline
p_k & \frown & p_{l_1} & \frown & \ldots & \frown & p_{l_n} & \frown & p_l & \frown & p_{k_n} & \frown & \ldots & \frown & p_{k_1} & \frown & p_k \\ \hline
\end{array},
\end{align*}
where $p_l$ is an inflexion point of $\M$ and $p_{k_i}\neq p_{l_i}$ for $i=1, 2, \ldots, n$.
\end{enumerate}
\end{lem}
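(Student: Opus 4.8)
The plan is to construct every maximal glueing scheme by iterating the one-step prolongation of Proposition \ref{PropAlgAlwaysGoFurhter}, and to pin down the global shape of the schemes using the symmetry produced by swapping the two rows, which under the secant map is exactly the central symmetry $p\mapsto -p$ of $\SC(M)$ (the origin $\mathbf{0}$ being the center of symmetry). Throughout I would regard a glueing scheme as a connected arc in the set of parallel pairs inside $M\times M$, closed off by its limiting degenerate pairs; Proposition \ref{PropDiffeoBetweenParallelArcs} guarantees that each block is a well-defined such arc.

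For part (i), given two distinct arcs $\overarc{p_{k_1}}{p_{k_2}}$, $\overarc{p_{l_1}}{p_{l_2}}$ in the same $\Phi_i$, the basic block $\begin{array}{ccc} p_{k_1} & \frown & p_{k_2} \\ \hline p_{l_1} & \frown & p_{l_2} \\ \hline \end{array}$ is a glueing scheme by Proposition \ref{PropDiffeoBetweenParallelArcs}. I would prolong it to the right by Proposition \ref{PropAlgAlwaysGoFurhter}, and prolong the reversed block $\begin{array}{ccc} p_{k_2} & \frown & p_{k_1} \\ \hline p_{l_2} & \frown & p_{l_1} \\ \hline \end{array}$ by the same proposition to extend it to the left. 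Since $M$ has only finitely many arcs, iterating these two prolongations must terminate: the scheme either closes into a cycle or runs into inflexion points at both ends, at which no further column can be attached. This yields a maximal glueing scheme containing the block (existence). Uniqueness follows because at every stage the prolongation is forced and unique by Proposition \ref{PropAlgAlwaysGoFurhter}, so two maximal schemes containing the same block must agree column by column and hence coincide.

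For part (ii), I would start at the inflexion point $p_k=f(s_k)$, where $\varphi_i:=\varphi_{\M}(s_k)$ is a local extremum of the angle function. The two arcs of $\Phi_i$ emanating from $p_k$ then form a parallel-pair relation that pinches at $p_k$, supplying an initial block $\begin{array}{ccc} p_k & \frown & p_{k_1} \\ \hline p_k & \frown & p_{l_1} \\ \hline \end{array}$. Repeatedly applying Proposition \ref{PropAlgAlwaysGoFurhter} I would prolong this forward, recording a top track $p_k, p_{k_1}, \ldots$ and a bottom track $p_k, p_{l_1}, \ldots$. The key geometric input is that, once closed off by its degenerate limits, the curve of parallel pairs meets the diagonal of $M\times M$ exactly at the inflexion points of $M$ — these are precisely the pairs that the secant map carries to the origin, by Proposition \ref{PropPassOrigin}. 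Because this curve is compact, the forward prolongation cannot run on indefinitely; it must return to the diagonal, i.e.\ pinch again, at a second inflexion point $p_l$.

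Finally I would invoke the row-swap involution. Interchanging the two rows of a scheme corresponds to $(a,b)\mapsto(b,a)$ on parallel pairs, hence to $p\mapsto -p$ on $\SC(M)$, and it fixes every diagonal pair, in particular the pinch pairs at $p_k$ and $p_l$. Therefore the maximal scheme through $p_k$ is invariant under reversing its columns while simultaneously swapping its rows. This palindromic symmetry forces the portion after the pinch at $p_l$ to be the mirror image, with rows interchanged, of the portion before it, giving exactly the asserted form and showing $p_{k_i}\neq p_{l_i}$ away from the pinches. The main obstacle I anticipate is the precise justification that the prolongation pinches at an inflexion point rather than cycling or crossing the diagonal transversally — that is, identifying the diagonal intersections of the parallel-pair curve exactly with inflexion points — together with checking that the degenerate initial block at $p_k$ may legitimately be fed into Proposition \ref{PropAlgAlwaysGoFurhter}, whose statement is phrased for a non-degenerate block.
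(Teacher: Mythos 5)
A preliminary remark: the paper does not prove this lemma at all --- it is imported verbatim from Lemma 4.20 of \cite{D-Z} --- so your attempt can only be judged against the strategy that the surrounding machinery (Propositions \ref{PropDiffeoBetweenParallelArcs} and \ref{PropAlgAlwaysGoFurhter}) is set up to support. Your part (i) is essentially that strategy and is sound: the right-prolongation of Proposition \ref{PropAlgAlwaysGoFurhter} is unique, applying it to the reversed block gives a unique left-prolongation, and finiteness of the set of blocks together with determinism and invertibility of the prolongation forces the orbit of the given block to close into a unique cycle. One correction, though: a maximal scheme never ``runs into inflexion points at both ends, at which no further column can be attached.'' The prolongation continues \emph{through} an inflexion point via the degenerate column $p_k$ over $p_k$ (this is exactly what the scheme in part (ii) records), and every maximal scheme is a closed cycle, consistently with Theorem \ref{ThmAlgPartBetweenIflPt}; your phrasing suggests the half-scheme from $p_k$ to $p_l$ would already be maximal, which contradicts the statement you are proving.

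The genuine gap is in part (ii), at the step you yourself flag: ``because this curve is compact, the forward prolongation \ldots must return to the diagonal.'' Compactness only gives that the prolongation closes into a loop; it does not force a second intersection with the diagonal --- indeed the components of the parallel-pair curve that avoid the diagonal are exactly those producing branches of $\SC(M)$ not passing through the origin. The missing idea is to use the involution $\sigma(a,b)=(b,a)$ \emph{before}, not after, locating $p_l$. The component $\Gamma$ of the (closed-up) parallel-pair curve through $(p_k,p_k)$ is $\sigma$-invariant because it contains the fixed point $(p_k,p_k)$; since $\varphi_{M}$ has a nondegenerate extremum at $s_k$, $\Gamma$ is near $(p_k,p_k)$ a smooth arc transverse to the diagonal, and $\sigma$ swaps its two local half-branches, so $\sigma|_{\Gamma}$ is an orientation-reversing involution of a circle and therefore has exactly two fixed points. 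Fixed points of $\sigma$ are diagonal pairs, and diagonal points in the closure of the nontrivial parallel-pair set occur precisely at local extrema of $\varphi_{M}$, i.e.\ at inflexion points of $M$ (note that a single arc of $\Phi_i$ carries no nontrivial parallel pairs, since $\varphi_{M}$ is injective on it). This produces the second pinch $(p_l,p_l)$ with $p_l$ an inflexion point; only then does your palindromic-symmetry argument legitimately deliver the stated form of the scheme and the condition $p_{k_i}\neq p_{l_i}$. With the order of these two steps corrected, and the degenerate initial column at $p_k$ justified by the local transversality above, the argument goes through.
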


\medskip
From Proposition \ref{PropPassOrigin}, Remark \ref{RemGlueSchemeIsBranch} and Lemma \ref{LemPropMaxGlueSchemes} we get the following theorem.

\begin{thm}\label{ThmAlgPartBetweenIflPt}
If $\M$ has $2n$ inflexion points then every branch of $\SC( M)$ is a closed curve and there exist exactly $n$ branches such that
\begin{enumerate}[(i)]
\item the origin is the center of symmetry of each such branch,
\item each branch passes through the origin twice and each time the origin is an inflexion point of this branch.
\end{enumerate}
\end{thm}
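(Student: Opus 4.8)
The plan is to reduce Theorem \ref{ThmAlgPartBetweenIflPt} entirely to the structural facts already established about maximal glueing schemes, using Lemma \ref{LemPropMaxGlueSchemes} as the central input. First I would observe that since $\M$ is generic with $2n$ inflexion points, the angle function $\varphi_{\M}$ has $2n$ local extrema, which come in the alternating max/min pattern recorded just after Definition \ref{DefLocalExtrema}. Each inflexion point $p_k=f(s_k)$ corresponds to a local extremum of $\varphi_{\M}$, and by Remark \ref{RemGlueSchemeIsBranch} every branch of $\SC(\M)$ is exactly the image under the secant map of one maximal glueing scheme. So the whole theorem is a statement about how the $2n$ inflexion points distribute among the maximal glueing schemes.

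The key step is to invoke part (ii) of Lemma \ref{LemPropMaxGlueSchemes}: whenever $p_k$ is an inflexion point, there is a maximal glueing scheme beginning and ending at $p_k$ whose ``far'' vertex $p_l$ is also an inflexion point, and whose two rows are mirror images of each other. I would argue that this mirror-symmetric form of the scheme is precisely what forces the origin to be a center of symmetry of the corresponding branch. Indeed, the secant map sends the arc $\begin{array}{ccc} p_{k_1} &\frown& p_{k_2} \\ \hline p_{l_1} &\frown& p_{l_2} \\ \hline \end{array}$ to $a-b$ and the swapped arc $\begin{array}{ccc} p_{l_1} &\frown& p_{l_2} \\ \hline p_{k_1} &\frown& p_{k_2} \\ \hline \end{array}$ to $b-a$; the row-swap symmetry of the scheme in Lemma \ref{LemPropMaxGlueSchemes}(ii) therefore realizes the map $v\mapsto -v$ on the branch, which is claim (i). For claim (ii), the two endpoints of the scheme are the inflexion points $p_k$ and $p_l$, and by Proposition \ref{PropPassOrigin} the secant caustic passes through the origin at each inflexion point; since an inflexion point contributes a parallel pair degenerating to a single point, $a-b\to\bold 0$ there, so the branch meets the origin exactly at the two scheme-endpoints, i.e. twice. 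That these crossings are inflexion points of the branch follows from Corollary \ref{CorInflofCS}, since one of the two points of the parallel pair is an inflexion point of $\M$ at precisely those two places.

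It then remains to count. I would note that the scheme of Lemma \ref{LemPropMaxGlueSchemes}(ii) pairs up its two inflexion endpoints $p_k$ and $p_l$, and that part (i) of the same lemma guarantees uniqueness of the maximal scheme through any given arc, so no inflexion point can serve as an endpoint of two distinct such schemes. Hence the $2n$ inflexion points are partitioned into $n$ pairs, each pair being the two endpoints of exactly one of these distinguished branches, giving exactly $n$ branches with the stated properties. Finally, the assertion that \emph{every} branch is a closed curve should follow from the fact that every maximal glueing scheme returns to its starting pair (the schemes that do not involve inflexion points close up by the alternation structure, and those that do close up at their inflexion endpoints as in the displayed form), together with Proposition \ref{PropAlgAlwaysGoFurhter}, which says the glueing process always continues until it closes.

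I expect the main obstacle to be the rigorous verification that the mirror-symmetry of the two rows of the scheme in Lemma \ref{LemPropMaxGlueSchemes}(ii) translates cleanly into the geometric symmetry $v\mapsto -v$ of the parametrized branch, i.e. that the parametrization of the branch coming from reading the scheme in one direction coincides, after the antipodal map, with reading the conjugate scheme — this requires carefully matching up the secant-map images of paired arcs and checking that the diffeomorphisms of Proposition \ref{PropDiffeoBetweenParallelArcs} are compatible with the swap. The counting argument and the ``closed curve'' claim are comparatively routine once the symmetry is pinned down, but I would be careful to confirm that branches \emph{not} arising from Lemma \ref{LemPropMaxGlueSchemes}(ii) genuinely avoid the origin, so that the count of $n$ origin-passing branches is exact rather than merely a lower bound.
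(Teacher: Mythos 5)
Your proposal is correct and follows essentially the same route as the paper, which derives the theorem directly from Proposition \ref{PropPassOrigin}, Remark \ref{RemGlueSchemeIsBranch} and Lemma \ref{LemPropMaxGlueSchemes} without further elaboration; your reading of the row-swap symmetry of the scheme in Lemma \ref{LemPropMaxGlueSchemes}(ii) as the antipodal map $v\mapsto -v$, and the pairing of the $2n$ inflexion points into $n$ scheme-endpoint pairs, is exactly the intended argument. The details you supply (uniqueness via Lemma \ref{LemPropMaxGlueSchemes}(i), closedness via Proposition \ref{PropAlgAlwaysGoFurhter}) are consistent with the paper's treatment.
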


We illustrate Theorem \ref{ThmAlgPartBetweenIflPt} in Fig. \ref{PictureAlgExam}. Observe that despite of the fact that the secant caustic of $M$ has the center of symmetry, there can exist branches of $\SC( M)$ without this property, as illustrated in Fig. \ref{PictureAlgExam}.

\begin{thm}\label{ThmInflofSC}
Let $ M$ be a generic regular closed curve, let $2n>0$ be the number of inflexion points of $ M$. If $\#\mathcal{S}_{\M}=2m$ then
\begin{enumerate}[(i)]
\item the number of inflexion points of $\SC( M)$ is equal to $4m-2n$,
\item the number of inflexion points of every branch of $\SC( M)$ is even,
\item the number of inflexion points of every branch of $\SC(M)$ passing through the origin is $2$ modulo $4$.
\item the total number of inflexion points in all branches with the exception of branches passing through the origin is a multiple of $4$.
\end{enumerate}
\end{thm}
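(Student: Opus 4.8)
The plan is to exploit the correspondence between inflexion points of $\SC(M)$ and inflexion points of $M$ established in Corollary~\ref{CorInflofCS}, combined with the combinatorial structure of the sets of parallel arcs $\Phi_i$ from Definition~\ref{DefSetParallArcs}. The first step is to prove part~(i) by a counting argument. By Corollary~\ref{CorInflofCS}, a parallel pair $a,b$ produces inflexion points $a-b$ and $b-a$ of $\SC(M)$ precisely when one of $a,b$ is an inflexion point of $M$. Since $M$ has $2n$ inflexion points, I would count, for each inflexion point $p_k=f(s_k)$ of $M$, the number of points $q\in M$ forming a parallel pair with $p_k$ such that $p_k-q$ is a regular point of $\SC(M)$; each such $q$ contributes inflexion points. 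The key observation is that $p_k$ lies at the boundary of the sets of parallel arcs meeting at the local extremum $\varphi_k$, and the number of arcs passing "over" $p_k$ at that extremal angle is governed by $m$. A careful bookkeeping using the structure of $\mathcal{S}_{\M}$ (of cardinality $2m$) against the $2n$ extrema should yield that the total count of inflexion points of $\SC(M)$ is $2(2m)-2n=4m-2n$; the factor $2$ reflects the central symmetry ($a-b$ and $b-a$ come in pairs), while subtracting $2n$ corrects for the overcounting at pairs where \emph{both} $a$ and $b$ are inflexion points (each such degenerate pair of extrema is shared).

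For part~(iii), I would use the explicit form of the maximal glueing scheme passing through the origin given in Lemma~\ref{LemPropMaxGlueSchemes}(ii). Such a branch runs from an inflexion point $p_k$ through intermediate arcs to another inflexion point $p_l$ and back. By Proposition~\ref{PropPassOrigin}, the origin is reached exactly at the inflexion points of $M$, and by Theorem~\ref{ThmAlgPartBetweenIflPt} the origin is itself an inflexion point of the branch, traversed twice. Reading off the glueing scheme, the inflexion points along one such branch consist of the two passages through the origin plus the inflexion points contributed by the interior parallel pairs $(p_{k_i},p_{l_i})$; by the symmetry of the scheme these interior contributions come in pairs, so that the count is $2+4r$ for some integer $r$, i.e. $2\bmod 4$. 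Part~(ii) then follows for branches through the origin, and for the remaining branches I would invoke Corollary~\ref{CorEvenNumberCuspsInfl} together with the central-symmetry structure: a branch not passing through the origin either is itself centrally symmetric or is paired with its image under $p\mapsto -p$, and in both cases Corollary~\ref{CorInflofCS} forces inflexion points to occur in symmetric pairs, giving an even count.

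Finally, part~(iv) is a consequence of~(i) and~(iii) by subtraction. By Theorem~\ref{ThmAlgPartBetweenIflPt} there are exactly $n$ branches passing through the origin, and by~(iii) each carries $2\bmod 4$ inflexion points; hence these $n$ branches together carry $2n \bmod 4$ inflexion points. Subtracting from the total $4m-2n$ of part~(i) leaves $4m-2n-2n \equiv 4m-4n \equiv 0 \pmod 4$ (after accounting modulo $4$ for the $n$ contributions of $2$ each, which sum to $2n$, matching the $2n$ subtracted), so the inflexion points in all remaining branches number a multiple of $4$. I expect the main obstacle to be part~(i): establishing the exact count $4m-2n$ requires carefully matching the combinatorics of the sequence $\mathcal{S}_{\M}$ and the sets $\Phi_i$ against the regular/singular dichotomy of $\SC(M)$, ensuring that no inflexion point is double-counted when an extremal angle is shared between adjacent sets of parallel arcs, and correctly handling the degenerate case where a parallel pair consists of two inflexion points of $M$ simultaneously.
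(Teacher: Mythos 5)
Your overall skeleton matches the paper's (Corollary \ref{CorInflofCS} plus the glueing schemes and Theorem \ref{ThmAlgPartBetweenIflPt}, with (iv) obtained from (i) and (iii) by subtraction), but three steps have genuine gaps. The most serious is in part (iii). From the maximal glueing scheme through the origin, each interior parallel pair $(p_{k_i},p_{l_i})$ in which one point is an inflexion point of $M$ contributes exactly two inflexion points of the branch, namely $p_{k_i}-p_{l_i}$ and $p_{l_i}-p_{k_i}$; so the total count is $2+2j$, where $j$ is the number of points among $p_{k_1},\dots,p_{k_n},p_{l_1},\dots,p_{l_n}$ that are inflexion points of $M$. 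Your ``contributions come in pairs'' only produces this factor $2$, not the factor $4$ you assert; to conclude $2 \bmod 4$ you still must show that $j$ is even, and nothing in your argument does this. The paper obtains the evenness of $j$ by passing to the corresponding branch of the Wigner caustic, whose maximal glueing scheme is the ``first half'' of the same scheme, and invoking the fact (Corollary 5.13 of \cite{D-Z}) that such a branch has an even number of inflexion points. Without this external input the assertion $2\bmod 4$ does not follow.

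Second, in part (ii) your symmetry argument works only for branches that are themselves centrally symmetric. For a branch $C$ with $-C\neq C$ the point reflection shows that $C$ and $-C$ have the \emph{same} number of inflexion points, not that each number is even. The paper instead uses the general parity fact that a closed curve with at most cusp singularities has an even number of inflexion points (Lemma 6.10 of \cite{D-Z}), which applies to every branch uniformly and makes the case distinction unnecessary. Third, in part (i) your bookkeeping $2\cdot 2m - 2n$ lands on the right number but for the wrong reason: for a generic curve no parallel pair consists of two inflexion points of $M$ (distinct local extrema of the angle function generically have distinct values), so there is no ``shared degenerate pair'' to correct for. The correct count is that the $2m-2n$ points of $\mathcal{S}_{M}$ that are parallel to, but distinct from, inflexion points each contribute two inflexion points of $\SC(M)$ by Corollary \ref{CorInflofCS}, while the $2n$ inflexion points of $M$ account for exactly the $2n$ inflexion points at the origin supplied by Theorem \ref{ThmAlgPartBetweenIflPt}; this gives $2(2m-2n)+2n=4m-2n$. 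Part (iv) as you state it is fine once (i) and (iii) are secured.
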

\begin{proof}
\begin{enumerate}[(i)]
\item Since the number of points in $ M$ which are parallel to inflexion points of $ M$ is equal to $2m-2n$, by Proposition \ref{CorInflofCS}, the number of inflexion points of $\SC( M)$ with the exception of inflexion points which are in the origin is equal to $2(2m-2n)$. Since we have $2n$ inflexion points of $\SC( M)$ in the origin, we end the proof of (i).

\item Since any branch of $\SC( M)$ is a closed curve with at most cusp singularities, the number of inflexion points of every branch of $\SC( M)$ is even (see Lemma 4.11 in \cite{D-Z3}).

\item Let $C$ be a branch of $\SC( M)$ passing through the origin. By Lemma \ref{LemPropMaxGlueSchemes} and Proposition \ref{PropAlgAlwaysGoFurhter} the maximal glueing scheme for $\C$ has the following form:
\begin{align*}
\begin{array}{ccccccccccccccccc}
p_k & \frown & p_{k_1} & \frown & \ldots & \frown & p_{k_n} & \frown & p_l & \frown & p_{l_n} & \frown & \ldots & \frown & p_{l_1} & \frown & p_k \\ \hline
p_k & \frown & p_{l_1} & \frown & \ldots & \frown & p_{l_n} & \frown & p_l & \frown & p_{k_n} & \frown & \ldots & \frown & p_{k_1} & \frown & p_k \\ \hline
\end{array},
\end{align*}
where $p_k$, $p_l$ are inflexion points of $\M$ and $p_{k_i}\neq p_{l_i}$ for $i=1, 2, \ldots, n$.

By Lemma 3.20 in \cite{D-Z} the maximal glueing scheme for the corresponding branch of the Wigner caustic has the following form
\begin{align*}\begin{array}{ccccccccc}
p_k & \frown & p_{k_1} & \frown & \ldots & \frown & p_{k_n} & \frown & p_l \\ \hline
p_k & \frown & p_{l_1} & \frown & \ldots & \frown & p_{l_n} & \frown & p_l \\ \hline
\end{array}.
\end{align*}

By Theorem 4.10 in \cite{D-Z} this branch of the Wigner caustic has an even number of inflexion points. It means that there is an even number of points corresponding to inflexion points of $ M$ among $p_{k_1}, \ldots, p_{k_n}, p_{l_1}, \ldots, p_{l_n}$. Therefore by Proposition \ref{CorInflofCS} the number of inflexion points of $\C$ including $\begin{array}{c}  p_k \\ \hline p_k \\ \hline\end{array}$ and $\begin{array}{c} p_l \\ \hline p_l \\ \hline\end{array}$ is $2$ modulo $4$.

\item It is a consequence of (i) and (iii) and Theorem \ref{ThmAlgPartBetweenIflPt}.
\end{enumerate}
\end{proof}

\begin{defn}\label{DefTangentSC}
The tangent line of $\SC(M)$ (respectively of $\Eq_{\lambda}(M)$) at a cusp point $p$ is the limit of a sequence $T_{q_n}\SC(M)$ in $\mathbb{R}P^1$ for any sequence $q_n$ of a regular points of $\SC(M)$ (respectively of $\Eq_{\lambda}(M)$) converging to $p$.
\end{defn}

\begin{defn}
Let $\mathbbm{n}: M\to S^1$ be a continuous unit normal vector field to $M$. A vector field $\mathbbm{n}_{\SC}: \SC(M)\ni a-b\mapsto \mathbbm{n}(a)\in S^1$ for every parallel pair $a,b$ of $M$ is called a \textit{normal vector field} to $\SC(M)$.
\end{defn}

It is easy to see that $\mathbbm{n}_{\SC}$ is continuous on every branch of $\SC(M)$ and the vector $\mathbbm{n}_{\SC}(a-b)$ is perpendicular to the tangent space to $\SC(M)$ at $a-b$.

\begin{defn}
A \textit{rotation number} of a curve with at most cusp singularities is a rotation number of its continuous unit normal vector field.
\end{defn}

This definition coincides with the classical definition of the rotation number for regular curves.

\begin{thm}\label{ThmAlgEvenNoOfCusp}
If $C$ is a smooth branch of $\SC( M)$ then the number of cusps of $C$ is even.
\end{thm}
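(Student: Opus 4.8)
The plan is to prove that the number of cusps on any smooth branch $C$ of $\SC(M)$ is even by combining the rotation-number machinery (via the normal vector field $\mathbbm{n}_{\SC}$) with the parity-of-inflexions result already established in Theorem \ref{ThmInflofSC}(ii). The guiding principle is the classical fact that for a closed curve with only cusp singularities, the number of cusps is constrained modulo $2$ by the rotation number together with the number of inflexion points. Each cusp is a point where the tangent direction reverses, and each inflexion is a point where the curvature (equivalently the sign of the turning) changes; tracking how the continuous unit normal $\mathbbm{n}_{\SC}$ winds around $S^1$ as we traverse the closed branch $C$ will yield the parity constraint.

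\medskip

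First I would record that every branch $C$ of $\SC(M)$ is a \emph{closed} curve with at most cusp singularities: closedness follows from Theorem \ref{ThmAlgPartBetweenIflPt} and the correspondence between maximal glueing schemes and branches (Remark \ref{RemGlueSchemeIsBranch}), while the cusp-only singularity structure follows from Corollary \ref{PropSingularPointOfBs} and the genericity of $M$. Next I would invoke the normal vector field $\mathbbm{n}_{\SC}$, which is defined and continuous on all of $C$ and is perpendicular to the tangent space at each regular point. The key local analysis is the behaviour of $\mathbbm{n}_{\SC}$ at a cusp versus at an inflexion point of $C$: at a regular point the tangent turns monotonically and $\mathbbm{n}_{\SC}$ follows; at a cusp the tangent direction reverses (a half-turn in $\mathbb{R}P^1$ but the normal remains continuous by the definition of $\mathbbm{n}_{\SC}$), and at an inflexion point of $C$ the sense of turning of $\mathbbm{n}_{\SC}$ reverses. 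The plan is to show that the rotation number of $\mathbbm{n}_{\SC}$ around $C$, computed as the total signed turning, changes its half-integer/integer character exactly according to the parity of the number of cusps.

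\medskip

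The cleanest route is to appeal directly to the corresponding result for the Wigner caustic. The glueing scheme for a branch $C$ of $\SC(M)$ has the same combinatorial shape as the glueing scheme for a branch of the Wigner caustic $E_{1/2}(M)$ (compare the schemes appearing in Lemma \ref{LemPropMaxGlueSchemes} and in the proof of Theorem \ref{ThmInflofSC}), the only geometric difference being that the secant caustic uses the difference $a-b$ where the Wigner caustic uses the midpoint $\frac{a+b}{2}$, and that the relevant singular pairs satisfy the \emph{same-side} rather than \emph{different-side} curvature condition (the remark after Corollary \ref{PropSingularPointOfBs}). Since the parity of the number of cusps is a homotopy-type invariant detected by the normal vector field, and the normal field $\mathbbm{n}_{\SC}(a-b)=\mathbbm{n}(a)$ is defined by exactly the same formula as for the Wigner caustic branch built from the same pair of parallel arcs, I would transfer the even-cusp statement from the Wigner caustic (Theorem 6.13 / the analogous cusp-parity result in \cite{D-Z}) arc-by-arc along the shared glueing scheme, checking that each pairing of parallel arcs contributes cusps in matching positions.

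\medskip

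\textbf{The main obstacle} will be justifying the transfer rigorously, rather than merely invoking the analogy. The subtlety is that the secant map $S_M(a,b)=a-b$ and the midpoint map $(a,b)\mapsto\frac{a+b}{2}$ produce cusps at \emph{different} parameter values in general: by Corollary \ref{PropSingularPointOfBs} a cusp of $\SC(M)$ occurs when $M$ is curved in the same side with $|\kappa_M(a)|=|\kappa_M(b)|$, whereas a cusp of the Wigner caustic occurs in the different-side situation. So I cannot directly match cusps one-to-one between the two caustics. Instead I expect to argue intrinsically: traverse one branch $C$ once, lift the tangent direction to a continuous map $S^1\to\mathbb{R}P^1$, and note that each cusp contributes an odd half-turn while between consecutive cusps the tangent returns to a well-defined direction. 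Because $C$ closes up, the total number of direction reversals must be even, giving the result. The delicate point requiring care is verifying, using the diffeomorphism between parallel arcs in Proposition \ref{PropDiffeoBetweenParallelArcs}, that $\mathbbm{n}_{\SC}$ is genuinely single-valued and continuous across the glueing points where two scheme-segments meet, so that the closed-curve turning argument applies without a hidden jump.
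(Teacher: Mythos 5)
Your final ``intrinsic'' argument is exactly the paper's proof: the paper simply observes that $\mathbbm{n}_{\SC}$ is continuous on the closed branch $C$, that at each cusp it switches from pointing into the cusp on one regular component to pointing out of it on the other (equivalently, the sign in $T=\pm i\,\mathbbm{n}_{\SC}$ flips), and that closing up forces an even number of such flips, so the rotation number is an integer and the cusp count is even. The detour through transferring the statement from the Wigner caustic, and the appeal to inflexion-point parity, are unnecessary --- you correctly diagnose why the cusp-by-cusp transfer fails, and the paper never uses either ingredient --- so the only point to make precise is the one you already flag, namely the continuity of $\mathbbm{n}_{\SC}$ across the glueing points, which follows from $\mathbbm{n}_{\SC}(a-b)=\mathbbm{n}(a)$ and the continuity of $\mathbbm{n}$ on $M$.
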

\begin{proof}
Let $\mathbbm{n}_{\SC}$ be a normal vector field to $C$. The vector field $\mathbbm{n}_{\SC}$ is continuous and normal to the cusp singularity. Thus it is directed outside the cusp on the one of two connected regular components and is directed inside the cusp on the other component as it is illustrated in Fig. \ref{FigNormalVFtoCusp}. Since $C$ is a closed curve and $\mathbbm{n}_{\SC}$ is continuous, the rotation number of $C$ is an integer. Therefore the number of cusps of $C$ is even.

\begin{figure}[h]
\centering
\includegraphics[scale=0.15]{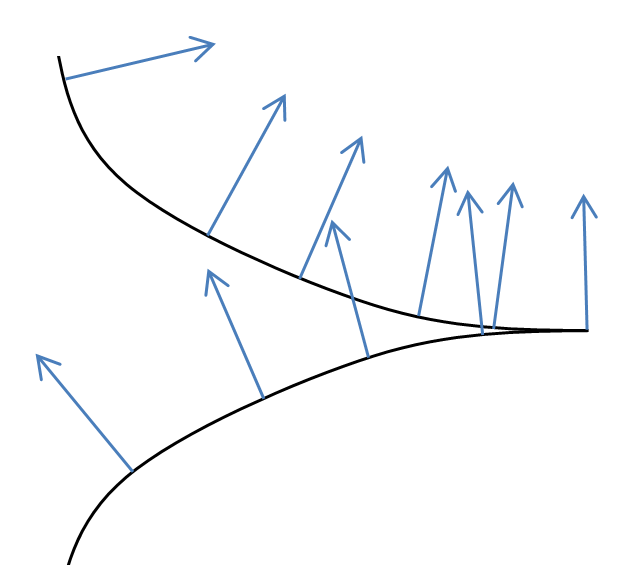}
\caption{A cusp singularity with a continuous normal vector field}
\label{FigNormalVFtoCusp}
\end{figure}
\end{proof}

\begin{prop}
Let $C$ be a branch of $\SC(M)$ passing through the origin. An oriented half-branch of $C$ between two inflexion points at the origin has even number of cusps if and only if the tangent and normal vector fields at the beginning and the end of this half-branch define the same orientation of $T_{(0,0)}\mathbb{R}^2$.
\end{prop}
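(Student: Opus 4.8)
The plan is to track, along the oriented half-branch, the orientation of $T_{(0,0)}\mathbb{R}^2$ determined by the unit tangent field together with the normal field $\mathbbm{n}_{\SC}$, and to show that this orientation reverses precisely at the cusps. First I would fix the orientation of the half-branch and let $\mathbf{t}$ denote the associated unit tangent vector field of $C$ on the regular part of the half-branch, together with the unit normal field $\mathbbm{n}_{\SC}$, which by the remark following its definition is continuous on all of $C$, including at the cusps. Since the two endpoints of the half-branch are inflexion points of $C$ (Theorem \ref{ThmAlgPartBetweenIflPt}), they are regular points of $C$; hence both $\mathbf{t}$ and $\mathbbm{n}_{\SC}$ are defined there and the pair $(\mathbf{t},\mathbbm{n}_{\SC})$ is an orthonormal frame, whose determinant $\pm 1$ is exactly the orientation of $T_{(0,0)}\mathbb{R}^2$ referred to in the statement.

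Next I would analyze how $\det(\mathbf{t},\mathbbm{n}_{\SC})$ evolves as we traverse the half-branch. On each maximal regular arc both fields vary continuously and remain orthonormal, so $\det(\mathbf{t},\mathbbm{n}_{\SC})\in\{-1,+1\}$ is continuous and therefore constant on that arc. At a cusp the tangent \emph{line} is preserved (Definition \ref{DefTangentSC}) but the velocity direction reverses, so $\mathbf{t}$ jumps to $-\mathbf{t}$, while $\mathbbm{n}_{\SC}$ passes through continuously; consequently $\det(\mathbf{t},\mathbbm{n}_{\SC})$ changes sign at each cusp, and only there. Comparing the two endpoints, the value of $\det(\mathbf{t},\mathbbm{n}_{\SC})$ at the end equals $(-1)^{c}$ times its value at the beginning, where $c$ is the number of cusps of the half-branch. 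Thus $c$ is even if and only if the frame defines the same orientation of $T_{(0,0)}\mathbb{R}^2$ at both endpoints, which is the assertion. As a consistency check, applying the same bookkeeping to a full closed branch through the origin makes the two endpoints coincide, so the orientation trivially agrees and $c$ is even, recovering Theorem \ref{ThmAlgEvenNoOfCusp}.

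The main obstacle is the local verification at a cusp: one must confirm that the unit tangent vector genuinely reverses while $\mathbbm{n}_{\SC}$ remains continuous. This is where genericity of $M$ is used, ensuring $\SC(M)$ has at most ordinary cusps; in the local normal form $t\mapsto(t^2,t^3)$ the velocity direction flips by $\pi$ across $t=0$, giving $\mathbf{t}\mapsto-\mathbf{t}$, whereas continuity of $\mathbbm{n}_{\SC}$ is already supplied by its definition $\mathbbm{n}_{\SC}(a-b)=\mathbbm{n}(a)$ and the fact (Lemma \ref{LemParallelCurvature}(i)) that the tangent line of $\SC(M)$ at $a-b$ is parallel to that of $M$ at $a$. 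Once this cusp behaviour is pinned down, the remaining argument is the elementary sign-tracking described above.
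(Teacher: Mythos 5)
Your argument is correct and follows essentially the same route as the paper, whose proof consists of the single observation that the tangent vector field reverses orientation upon crossing a cusp (illustrated by a figure); your sign-tracking of $\det(\mathbf{t},\mathbbm{n}_{\SC})$ is just a careful write-up of that same idea. No discrepancy to report.
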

\begin{proof}
It is a consequence of a fact that the tangent vector field changes the orientation after crossing the cusp point (see Fig. \ref{FigCoorCusp}).

\begin{figure}[h]
\centering
\includegraphics[scale=0.20]{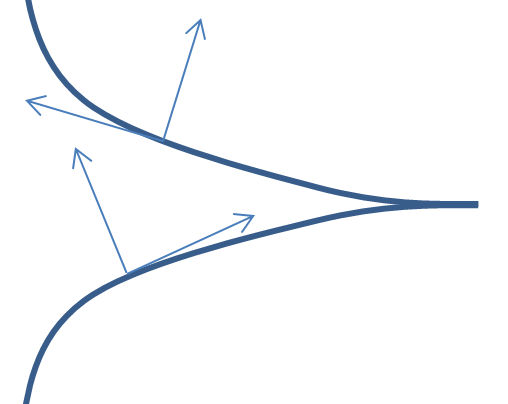}
\caption{A cusp singularity}
\label{FigCoorCusp}
\end{figure}
\end{proof}

\begin{defn}
We say that two arcs $\overarc{p_k}{p_l}$ and $\overarc{p_{k'}}{p_{l'}}$ in $\Phi_i$ are \textit{curved in the same side} (respectively \textit{curved in the different sides}) if these arcs are curved in the same side (respectively curved in the different sides) at every parallel pair $a,b$ such that $a\in\overarc{p_k}{p_l}-\{p_k, p_l\}$, $b\in\overarc{p_{k'}}{p_{l'}}-\{p_{k'}, p_{l'}\}$.
\end{defn}

By Corollary \ref{CorThmCurvPosWrinkled} we obtain the following useful observation.

\begin{cor}
Let $M$ be a generic regular closed curve. If there exist arcs curved in the same side $\overarc{p_k}{p_l}$ and $\overarc{p_{k'}}{p_{l'}}$ such that $p_k$, $p_{l'}$ or $p_{k'}$, $p_l$ are inflexion points of $M$ then $\SC(M)$ has at least two cusps.
\end{cor}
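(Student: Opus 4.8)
The plan is to read this corollary as the translation of Corollary~\ref{CorThmCurvPosWrinkled} into the language of parallel arcs: the hypothesis ``curved in the same side'' is assumption (iv) of Theorem~\ref{ThmCurvPosWrinkled}, the inflexion hypothesis is exactly the vanishing-curvature condition $\kappa_{\mathcal P}(p_0)=\kappa_{\mathcal Q}(q_1)=0$ of Corollary~\ref{CorThmCurvPosWrinkled}, and the remaining assumptions (i)--(iii) are automatic for two arcs of a single set $\Phi_i$. Thus the whole proof consists in choosing the right identification of the two arcs with $\mathcal P$ and $\mathcal Q$ and checking (i)--(iii); the conclusion then follows by invoking Corollary~\ref{CorThmCurvPosWrinkled} together with the decomposition of Theorem~\ref{EqAsSumThm}.

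First I would fix the labelling. Since $\overarc{p_k}{p_l}$ and $\overarc{p_{k'}}{p_{l'}}$ lie in the same $\Phi_i$, their endpoints sit at the two consecutive extremal levels $\varphi_i$ and $\varphi_{i+1}$ of the angle function $\varphi_M$; hence $p_k,p_{k'}$ is a parallel pair and $p_l,p_{l'}$ is a parallel pair, which is assumption (i). In the first case of the hypothesis, where $p_k$ and $p_{l'}$ are inflexion points, I set $\mathcal P=\overarc{p_k}{p_l}$ with $p_0=p_k$, $p_1=p_l$ and $\mathcal Q=\overarc{p_{k'}}{p_{l'}}$ with $q_0=p_{k'}$, $q_1=p_{l'}$, so that the two prescribed inflexion points become exactly $p_0$ and $q_1$; in the second case, where $p_{k'}$ and $p_l$ are inflexion points, I exchange the roles of the two arcs. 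Assumption (ii) is the content of Proposition~\ref{PropDiffeoBetweenParallelArcs}: the parallel-pairing between two arcs of the same $\Phi_i$ is a diffeomorphism, and since $\varphi_M$ restricted to each arc is a bijection onto the segment between $\varphi_i$ and $\varphi_{i+1}$, the endpoint $p_0$ is parallel only to $q_0$ and $p_1$ only to $q_1$.

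Next I would check the curvature conditions (iii). Because the arcs of $\Phi_i$ run between two consecutive points of $\mathcal S_M$, the function $\varphi_M$ is strictly monotone on each of them, so neither arc carries an interior inflexion point and the curvature keeps a constant sign in the interior; after orienting each arc in the direction of increasing $\varphi_M$ and, if necessary, reflecting the whole configuration, this sign is positive. By genericity the critical values of $\varphi_M$ are pairwise distinct, so at each extremal level there is a single inflexion point of $M$; consequently $p_k$ being an inflexion point forces the other point $p_{k'}$ at level $\varphi_i$ to be regular, and $p_{l'}$ being an inflexion point forces $p_l$ to be regular. This yields $\kappa_{\mathcal P}(p)>0$ for $p\neq p_0$, together with $\kappa_{\mathcal Q}(q_0)>0$, $\kappa_{\mathcal P}(p_0)=0$ and $\kappa_{\mathcal Q}(q_1)=0$, which are precisely the hypotheses of Corollary~\ref{CorThmCurvPosWrinkled}; assumption (iv) is the standing hypothesis that the two arcs are curved in the same side.

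With all hypotheses verified, Corollary~\ref{CorThmCurvPosWrinkled} produces at least two singular points of $\SC(\mathcal P\cup\mathcal Q)$, namely the mutually symmetric points $a-b$ and $b-a$ for a parallel pair of equal curvature; by Theorem~\ref{EqAsSumThm} they belong to $\SC(M)$, and since $M$ is generic its secant caustic has only cusp singularities (Corollary~\ref{PropSingularPointOfBs}), so these two points are cusps. I expect the one genuinely delicate step to be the orientation bookkeeping inside step three: one must make sure that orienting each arc by increasing $\varphi_M$ is simultaneously compatible with the tangent-vector matching required in Theorem~\ref{ThmCurvPosWrinkled} (equal tangents at parallel pairs) and with the ``curved in the same side'' hypothesis, and in particular that this forces the two inflexion points into the diagonal slots $p_0$ and $q_1$ rather than into $p_0,q_0$ or $p_1,q_1$ --- it is here that the ``same side'' assumption, as opposed to ``different sides,'' is essential.
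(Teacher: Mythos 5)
Your proposal is correct and takes essentially the same route as the paper, which derives this statement directly from Corollary~\ref{CorThmCurvPosWrinkled} without further comment; your identification of the two parallel arcs with $\mathcal{P}$ and $\mathcal{Q}$ (placing the inflexion points in the diagonal slots $p_0$ and $q_1$) and your verification of hypotheses (i)--(iv) via Proposition~\ref{PropDiffeoBetweenParallelArcs} and the monotonicity of $\varphi_M$ on each arc is just the natural filling-in of the details the paper leaves implicit.
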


\section{The geometry of the secant caustic of rosettes}

Let $L_C$ denote the length of a curve $C$. Let $A_C$ denote the area of the region bounded by a simple curve $C$. Let $\widetilde{A}_C$ denote the \textit{signed area} (or an \textit{oriented $\slash$ algebraic area}) of a closed oriented curve $C$, i.e. the integral 
$$\displaystyle\frac{1}{2}\int_C-y\,\mathrm{d}x+x\,\mathrm{d}y=\iint_{\mathbb{R}^2}w_C(x, y)\,\mathrm{d}x\,\mathrm{d}y,$$ 
where $w_C(x,y)$ is the winding number of $C$ around a point $(x, y)$.

\begin{defn}
A smooth regular oriented closed curve is called an $n$-\textit{rosette} if its curvature is positive and its rotation number is equal to $n$.
\end{defn}

Let $R_n$ be an $n$-rosette and let a point $\bold{0}$ be the origin of $\mathbb{R}^2$. Let 
\begin{align*}
[0, 2n\pi]\ni\theta\mapsto\gamma(\theta)\in\mathbb{R}^2
\end{align*} be a parameterization of $R_n$ in terms of the tangential angle $\theta$ to $R_n$. \change{Please notice that $\gamma(0)=\gamma(2n\pi)$. Therefore, we can identify $[0,2n\pi]$ with $S^1$ modulo $2n\pi$.} We will use a special parameterization which is based on a notion of a generalized support function $[0,2n\pi]\ni\theta\mapsto p(\theta)\in\mathbb{R}$. Geometrically $p(\theta)$ is an oriented distance between the origin $\bold{0}$ and the tangent line to $R_n$ at a point $\gamma(\theta)$ in the direction $(\cos\theta, \sin\theta)$. Since $R_n$ is an envelope of the family of tangent lines to it, one can easily get that the parameterization of $R_n$ in terms of a couple $(\theta, p(\theta))$ is as follows:
\begin{align*}
[0, 2n\pi]\ni\theta\mapsto\gamma(\theta):=\big(p(\theta)\cos\theta-p'(\theta)\sin\theta, p(\theta)\sin\theta+p'(\theta)\cos\theta)\in\mathbb{R}^2.
\end{align*}

The couple $(\theta, p(\theta))$ is called the polar-tangential coordinates of an $n$-rosette and it is very useful to study the convex and locally convex objects. For details see \cite{CM}.

The curvature and the radius of curvature of $R_n$ at a point $\gamma(\theta)$ are given by the formulas
\begin{align*}
\kappa(\theta)=\frac{1}{p(\theta)+p''(\theta)},\quad \rho(\theta)=p(\theta)+p''(\theta).
\end{align*}

The length and the oriented area of $R_n$ can be computed as follows:
\begin{align}
\label{CauchyFormula} L_{R_n}&=\int_0^{2n\pi}\rho(\theta)\,\mathrm{d}\theta=\int_0^{2n\pi}p(\theta)\,\mathrm{d}\theta,\\
\label{BlaschkeFormula} \widetilde{A}_{R_n}&=\frac{1}{2}\int_0^{2n\pi}\Big(p^2(\theta)-p'^2(\theta)\Big)\,\mathrm{d}\theta.
\end{align}
Formulas \eqref{CauchyFormula}, \eqref{BlaschkeFormula} are known as Cauchy's and Blaschke's formulas, respectively \cite{Gro}.

Geometrical objects related with rosettes were studied in many papers (see \cite{CM, D-Z, MM1, Z3, Z1} and the literature therein). Rosettes are also planar non-singular hedgehogs, i.e. curves which can be parameterized using their Gauss map. Hedgehogs can be viewed as Minkowski's difference of convex bodies (see \cite{MMa1}). In \cite{D-Z, Z1} the geometry of affine $\lambda$-equidistants of rosettes were studied it was shown that the Wigner caustic of a generic  $n$-rosette $R_n$ has exactly $n$ smooth branches:
\begin{itemize}
\item a branch $\Eq_{\frac{1}{2}, k}(R_n)$ for $k=1, \ldots, n-1$, which has the following parameterization:
\begin{align*}
[0,2n\pi]\ni\theta\mapsto\gamma_{\frac{1}{2}, k}(\theta):=\frac{1}{2}\left(\gamma(\theta)+\gamma(\theta+k\pi)\right)\in\mathbb{R}^2,
\end{align*}
\item a branch $\Eq_{\frac{1}{2}, n}(R_n)$, which has the following parameterization:
\begin{align*}
[0,n\pi]\ni\theta\mapsto\gamma_{\frac{1}{2}, n}(\theta):=\frac{1}{2}\left(\gamma(\theta)+\gamma(\theta+n\pi)\right)\in\mathbb{R}^2,
\end{align*}
\end{itemize}
where $[0,2n\pi]\ni\theta\mapsto\gamma(\theta)\in\mathbb{R}^2$ is a parameterization of a rosette $R_n$ in the polar-tangential coordinates $\big(p(\theta), \theta\big)$.

Let $\mathcal{R}_{\bold{0}}$ denote a point reflection through the origin of $\mathbb{R}^2$.

\begin{thm}\label{ThmRosettes}
Let $R_n$ be a generic $n$-rosette and let $[0,2n\pi]\ni\theta\mapsto\gamma(\theta)\in\mathbb{R}^2$ be a parameterization of $R_n$ in the polar-tangential coordinates $\big(p(\theta), \theta\big)$. Then
\begin{enumerate}[(i)]
\item there are $2n-1$ branches of the secant caustic of $R_n$:
\begin{itemize}
\item a branch $\SC_k(R_n)$ which has a parameterization 
\begin{align}
\label{SCparamRosette1}[0, 2n\pi]\ni\theta\mapsto\gamma_{k, n}(\theta):=\gamma(\theta)-\gamma(\theta+k\pi)\in\mathbb{R}^2
\end{align} 
for $k=1, 2, \ldots, n$,
\item a branch $\SC_{n+k}(R_n)$ which has a parameterization 
\begin{align}
\label{SCparamRosette2}[0, 2n\pi]\ni\theta\mapsto\gamma_{n+k, n}(\theta):=\gamma\big(\theta+k\pi\big)-\gamma(\theta)\in\mathbb{R}^2
\end{align}
for $k=1, 2, \ldots n-1$,
\end{itemize}
\item for each $k=1, 2, \ldots, n-1$ we have $\mathcal{R}_{\bold{0}}\big(\SC_k(R_n)\big)=\SC_{n+k}(R_n)$,
\item the branch $\SC_n(R_n)$ is centrally symmetric,
\item the rotation number of each branch of $\SC(R_n)$ is equal to $n$,
\item exactly $n$ branches of $\SC(R_n)$ are rosettes:
\begin{itemize}
\item $\SC_{k}(R_n)$ for $k$ odd if $n$ is even,
\item $\SC_{k}(R_n)$, $\SC_{n+k}(R_n)$ for $k$ odd and smaller than $n$ and $\SC_{n}(R_n)$ if $n$ is odd,
\end{itemize}
\item exactly $n-1$ branches of $\SC(R_n)$ are singular:
\begin{itemize}
\item $\SC_k(R_n)$ for $k$ even if $n$ is even,
\item $\SC_{k}(R_n)$, $\SC_{n+k}(R_n)$ for $k$ even and smaller than $n$ if $n$ is odd,
\end{itemize}
and the number of cusps in each singular branch is even,
\item the minimal number of cusps of $\SC(R_n)$ is $2(n-1)$,
\item if $\mathcal{C}$ is a non-singular branch of $\SC(R_n)$, then $L_{\mathcal{C}}=2L_{R_n}$,
\item if $\mathcal{C}$ is a singular branch of $\SC(R_n)$, then $L_{\mathcal{C}}\leqslant 2L_{R_n}$,
\item if $k<n$, then
\begin{align*}
\widetilde{A}_{\SC_k(R_n)}+4\widetilde{A}_{\Eq_{\frac{1}{2}, k}(R_n)} &=4\widetilde{A}_{R_n},\\
\widetilde{A}_{\SC_{n+k}(R_n)}+4\widetilde{A}_{\Eq_{\frac{1}{2}, k}(R_n)} &=4\widetilde{A}_{R_n},
\end{align*}
and if $k=n$, then
\begin{align*}
\widetilde{A}_{\SC_k(R_n)}+8\widetilde{A}_{\Eq_{\frac{1}{2}, k}(R_n)} &=4\widetilde{A}_{R_n}.
\end{align*}
\end{enumerate}
\end{thm}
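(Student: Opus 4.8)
The plan is to reduce the whole theorem to a single derivative identity together with parity and period bookkeeping. From the support-function parameterization one computes $\gamma'(\theta)=\rho(\theta)(-\sin\theta,\cos\theta)$ with $\rho=p+p''>0$, and since $(-\sin(\theta+k\pi),\cos(\theta+k\pi))=(-1)^k(-\sin\theta,\cos\theta)$, the parameterizations \eqref{SCparamRosette1}--\eqref{SCparamRosette2} satisfy
\begin{equation*}
\gamma_{k,n}'(\theta)=\big(\rho(\theta)-(-1)^k\rho(\theta+k\pi)\big)(-\sin\theta,\cos\theta).
\end{equation*}
I would first record that a parallel pair of $R_n$ consists of $\gamma(\theta)$ and $\gamma(\theta+k\pi)$ with $k\in\{1,\dots,2n-1\}$, because the tangent angles must agree modulo $\pi$; grouping parallel pairs by the shift $k$ (equivalently, decomposing the singular set of the secant map into the $2n-1$ off-diagonal $(1,1)$-curves $\{\theta_2=\theta_1+k\pi\}$ on the torus) produces exactly the $2n-1$ branches of (i). Part (ii) is immediate from $\gamma_{n+k,n}=-\gamma_{k,n}$, so $\SC_{n+k}(R_n)=\mathcal{R}_{\mathbf 0}\big(\SC_k(R_n)\big)$, and (iii) follows from the antiperiodicity $\gamma_{n,n}(\theta+n\pi)=-\gamma_{n,n}(\theta)$. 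For (iv) I would use the normal field $\mathbbm{n}_{\SC}(\gamma_{k,n}(\theta))=(\cos\theta,\sin\theta)$: as $\theta$ runs over $[0,2n\pi]$, the full range for every branch, it winds $n$ times, so each branch has rotation number $n$.

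Parts (v)--(vi) hinge on the sign of the scalar factor in the identity above. For $k$ odd it equals $\rho(\theta)+\rho(\theta+k\pi)>0$, so $\gamma_{k,n}$ is regular with unit tangent $(-\sin\theta,\cos\theta)$ turning monotonically; the branch then has positive curvature and is a rosette. For $k$ even the factor is $\rho(\theta)-\rho(\theta+k\pi)$, which vanishes precisely at the parallel pairs with $\kappa(\theta)=\kappa(\theta+k\pi)$; since for $k$ even the two tangent vectors coincide, $R_n$ is curved in the same side there, so by Corollary \ref{PropSingularPointOfBs} these are exactly the cusps of the branch. Counting the parities of the shift among the indices $1,\dots,2n-1$ yields $n$ regular branches and $n-1$ singular branches, distributed as stated for $n$ even and $n$ odd, and Theorem \ref{ThmAlgEvenNoOfCusp} gives that each singular branch has an even number of cusps.

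For (viii)--(ix) I would integrate the speed $|\gamma_{k,n}'(\theta)|=|\rho(\theta)-(-1)^k\rho(\theta+k\pi)|$ over $[0,2n\pi]$. For $k$ odd this equals $\int_0^{2n\pi}(\rho(\theta)+\rho(\theta+k\pi))\,d\theta=2L_{R_n}$ by Cauchy's formula \eqref{CauchyFormula} and translation invariance, proving (viii); for $k$ even the estimate $|\rho(\theta)-\rho(\theta+k\pi)|\le\rho(\theta)+\rho(\theta+k\pi)$ yields $L_{\mathcal C}\le 2L_{R_n}$, which is (ix). For (vii) the lower bound is forced: each of the $n-1$ singular branches has a positive even number of cusps, hence at least two, because the defining function $h_k(\theta)=\rho(\theta)-\rho(\theta+k\pi)$ has zero mean over its period and so changes sign at least twice. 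To see the bound is sharp I would exhibit the explicit rosette $p(\theta)=1+\epsilon\cos(\theta/n)$ with $\epsilon$ small: then $h_k$ is a nonzero multiple of $\sin(\theta/n+\mathrm{const})$, which has exactly two sign changes on each singular branch, so $\SC(R_n)$ has precisely $2(n-1)$ cusps.

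Part (x) is a direct signed-area computation with $\widetilde A_C=\tfrac12\int\det(c,c')\,d\theta$. Writing $a=\gamma(\theta)$ and $b=\gamma(\theta+k\pi)$ and expanding $\det(a\mp b,a'\mp b')$, the mixed terms $\det(a,b')+\det(b,a')$ cancel in the combination $\widetilde A_{\SC_k(R_n)}+4\widetilde A_{E_{0.5,k}(R_n)}$, leaving $\int_0^{2n\pi}(\det(a,a')+\det(b,b'))\,d\theta=4\widetilde A_{R_n}$ by translation invariance of the full-period integral; since $\mathcal{R}_{\mathbf 0}$ preserves signed area, the identity for $\SC_{n+k}(R_n)$ follows from (ii). For $k=n$ the integrand of $\widetilde A_{\SC_n(R_n)}$ is $n\pi$-periodic (by the antiperiodicity of $\gamma_{n,n}$), whereas $E_{0.5,n}(R_n)$ is parameterized only over $[0,n\pi]$, and this halving of the range is exactly what turns the coefficient $4$ into $8$ in $\widetilde A_{\SC_n(R_n)}+8\widetilde A_{E_{0.5,n}(R_n)}=4\widetilde A_{R_n}$. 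The step I expect to be the main obstacle is the achievability half of (vii): checking that the candidate rosette is genuinely generic, with simple transverse cusps and strictly positive curvature, so that each singular branch contributes exactly two cusps; the lower bound and all the other parts are essentially consequences of the single derivative identity above together with the careful tracking of periods in the $k=n$ cases.
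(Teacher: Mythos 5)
Your proposal is correct and reaches every part of the theorem, but it differs from the paper's proof in two places worth noting. For part (i) the paper does not use the torus decomposition you propose: it runs the glueing-scheme algorithm of Section 3, writes down explicit maximal glueing schemes for $\SC_k(R_n)$, $\SC_n(R_n)$ and $\SC_{n+k}(R_n)$, and then verifies completeness by counting $2n(2n-1)$ arcs against Corollary \ref{CorNumDiffArcs}; your decomposition of the singular set into the $2n-1$ translated diagonals $\theta_2=\theta_1+k\pi$ is more elementary and exploits the fact that for a rosette the tangential angle is a global monotone parameter, which is exactly why the two bookkeepings agree here. For part (x) the paper works entirely in the support-function formalism, applying the generalized Blaschke formula \eqref{BlaschkeFormula} to $p_{k,n}(\theta)=p(\theta)+(-1)^{k+1}p(\theta+k\pi)$ and then importing the value of the cross term $\Psi_{R_n}$ from Lemma 2.11 of \cite{Z1} to relate it to $\widetilde{A}_{E_{0.5,k}(R_n)}$; your direct expansion of $\det(a\mp b,a'\mp b')$ and cancellation of the mixed terms proves the same identities self-containedly, and your accounting for the factor $8$ when $k=n$ (the $n\pi$-periodicity of the Wigner-caustic parameterization versus the antiperiodicity of $\gamma_{n,n}$) matches the reason the paper's formula \eqref{AreaWC2} carries an extra factor of $2$. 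Everything else — the identity $\rho_{k,n}(\theta)=\rho(\theta)+(-1)^{k+1}\rho(\theta+k\pi)$, the parity split into $n$ rosette branches and $n-1$ singular ones, the zero-mean argument forcing an even positive number of cusps on each even-shift branch, the explicit cosine support function realizing exactly two cusps per singular branch (the paper uses $p(\theta)=2+\cos(\theta/n)$, essentially your example), and the length estimates via \eqref{CauchyFormula} — coincides with the paper. The one point you rightly flag, genericity of the extremal example in (vii), is treated no more carefully in the paper ("one can check"), so your proposal is not weaker there.
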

\begin{proof}
The set of parallel arcs has the following form
$$\Phi_0=\left\{\overarc{\p_0}{\p_1}, \overarc{\p_1}{\p_2}, \ldots, \overarc{\p_{2n-2}}{\p_{2n-1}}, \overarc{\p_{2n-1}}{\p_0}\right\}.$$

Let $\SC_{k}(R_n)$ be a smooth branch of $\SC(R_n)$. We can create the following maximal glueing schemes.
\begin{itemize}
\item A maximal glueing scheme of $\SC_k(R_n)$ for $k\in\{1,2,\ldots,n-1\}$:
$$\begin{array}{ccccccccccccc} 
\p_0	&\frown&	\p_1	&\frown&	\p_2	&\frown&	\ldots 	&\frown&	\p_{2n-2}	&\frown&	\p_{2n-1} 	&\frown&	\p_0 	\\ \hline 
\p_k	&\frown&	\p_{k+1}	&\frown&	\p_{k+2}	&\frown&	\ldots 	&\frown&	\p_{k-2}	&\frown&	\p_{k-1}	&\frown&	\p_k	\\ \hline
\end{array}.$$

\item A maximal glueing scheme of $\SC_n(R_n)$:
$$\begin{array}{ccccccccccccccc}
\p_0	&\frown&	\p_1	&\frown&	\ldots 	&\frown&	  \p_{n-1} 	&\frown& \p_n	&\frown&	\ldots 	&\frown& 	\p_{2n-1} 	&\frown& \p_0	\\ \hline
\p_n	&\frown&	\p_{n+1} &\frown&	\ldots 	&\frown&	\p_{2n-1} 	&\frown& \p_0 		&\frown& 	\ldots 	&\frown& 	\p_{n-1}	&\frown& 	\p_n	\\ \hline
\end{array}.$$

\item A maximal glueing scheme of $\SC_{n+k}(R_n)$ for $k\in\{1,2,\ldots,n-1\}$:
$$\begin{array}{ccccccccccccc} 
\p_k	&\frown&	\p_{k+1}	&\frown&	\p_{k+2}	&\frown&	\ldots 	&\frown&	\p_{k-2}	&\frown&	\p_{k-1}	&\frown&	\p_k	\\ \hline
\p_0	&\frown&	\p_1	&\frown&	\p_2	&\frown&	\ldots 	&\frown&	\p_{2n-2}	&\frown&	\p_{2n-1} 	&\frown&	\p_0 	\\ \hline 
\end{array}.$$
\end{itemize}

The total number of arcs of the glueing schemes for the secant caustic of $R_n$ presented above is $2n(2n-1)$. By Corollary \ref{CorNumDiffArcs} the total number of different arcs of the secant caustic of $R_n$ is equal to the same number. Thus there is no more maximal glueing schemes for the secant caustic of $R_n$. Therefore there are exactly $2n-1$ branches of $\SC(R_n)$ which are parameterized as in $(i)$. By parameterizations (\ref{SCparamRosette1}) and (\ref{SCparamRosette2}) we get (ii), (iii) and (iv).

Let $\rho(\theta)$ denote the radius of curvature of $R_n$ at a point $\gamma(\theta)$. Let us recall that $\rho(\theta)=p(\theta)+p''(\theta)$.

Since $p(\theta)$ is a support function of $R_n$, directly by (\ref{SCparamRosette1}) and (\ref{SCparamRosette2}) we get that
\begin{itemize}
\item the support function and the radius of curvature of $\SC_k(R_n)$ for $k=1, 2, \ldots, n$ are given by the following formulas:
\begin{align*}
p_{k, n}(\theta)&=p(\theta)+(-1)^{k+1}p(\theta+k\pi),\\
\rho_{k, n}(\theta)&=\rho(\theta)+(-1)^{k+1}\rho(\theta+k\pi).
\end{align*}
\item the support function and the radius of curvature of $\SC_k(R_n)$ for $k=n+1, \ldots, 2n-1$ are given by following formulas:
\begin{align*}
p_{k, n}(\theta) &= (-1)^{k-n}p(\theta+(k-n)\pi)-p(\theta),\\
\rho_{k, n}(\theta) &=(-1)^{k-n}\rho(\theta+(k-n)\pi)-\rho(\theta). 
\end{align*}
\end{itemize}

We will prove points (v) and (vi) only when $n$ is even, the proof for the remaining case is similar. Let us notice that $\SC_k(R_n)$ is singular at a point $\gamma_{k, n}(\theta)$ if and only if $\rho_{k, n}(\theta)=0$. Moreover let us notice that if $k$ is odd and $k<n$, then $\rho_{k, n}(\theta)=\rho(\theta)+\rho(\theta+k\pi)>0$ and $\rho_{n+k, n}(\theta)=-\rho(\theta+(k-n)\pi)-\rho(\theta)<0$. Hence $\SC_k(R_n)$ is an $n$-rosette if $k$ is odd. Now let us assume that $k$ is even. Since in this case $\rho_{k, n}(0)=\rho_{k, n}(2n\pi)$ and $\displaystyle\int_0^{2n\pi}\rho_{k, n}(\theta)\,\mathrm{d}\theta=0$, there are even number of zeros of the function $\rho_{k, n}$ in an interval of the length $2n\pi$.

One can check that $\displaystyle p(\theta)=2+\cos\frac{\theta}{n}$ is a support function of an $n$-rosette such that each singular branch of the secant caustic of $R_n$ has exactly $2$ cusps. This ends the proof of (vii).

To prove (viii) let us notice that if $\SC_k(R_n)$ is a rosette, then $|\rho_{k, n}(\theta)|=\rho(\theta)+\rho(\theta+m_k\pi)$, where $m_k$ is some integer depending on $k$. Hence
\begin{align*}
L_{\SC_k(R_n)}&=\int_0^{2n\pi}|\rho_{k, n}(\theta)|\,\mathrm{d}\theta=\int_0^{2n\pi}(\rho(\theta)+\rho(\theta+m_k\pi))\,\mathrm{d}\theta=L_{R_n}+L_{R_n}.
\end{align*}
To prove (ix) let us notice that if $\SC_k(R_n)$ is a singular hedgehog, then $|\rho_{k,n}|=|\rho(\theta)-\rho(\theta+m_k\pi)|$, where $m_k$ is some integer depending on $k$. Hence
\begin{align*}
L_{\SC_k(R_n)}=\int_0^{2n\pi}|\rho_{k, n}(\theta)|\,\mathrm{d}\theta=\int_0^{2n\pi}|\rho(\theta)-\rho(\theta+m_k\pi)|\,\mathrm{d}\theta\leqslant L_{R_n}+L_{R_n}.
\end{align*}

Let $k\leqslant n$. Then by generalized Blaschke formula we get that the oriented area of $\SC_k(R_n)$ is given by the following formula:
\begin{align}
\label{AreaSCk}
\widetilde{A}_{\SC_k(R_n)}&=\frac{1}{2}\int_0^{2n\pi}\left(p^2_{k, n}(\theta)-p'^2_{k, n}(\theta+k\pi)\right)\mathrm{d}\theta\\ 
\nonumber
&=2\widetilde{A}_{R_n}+2(-1)^{k+1}\Psi_{R_n},
\end{align}
where 
\begin{align*}
\Psi_{R_n}=\frac{1}{2}\int_0^{2n\pi}\left(p(\theta)p(\theta+k\pi)-p'(\theta)p'(\theta+k\pi)\right)\mathrm{d}\theta.
\end{align*}

From the calculation from the proof of Lemma 2.11 in \cite{Z1} we get the following relations:
\begin{itemize}
\item if $k<n$, then
\begin{align}
\label{AreaWC1}
\widetilde{A}_{\Eq_{\frac{1}{2}, k}(R_n)}=\frac{1}{2}\widetilde{A}_{R_n}+\frac{(-1)^k}{2}\Psi_{R_n},
\end{align}
\item and if $k=n$, then
\begin{align}
\label{AreaWC2}
2\widetilde{A}_{\Eq_{\frac{1}{2}, k}(R_n)}=\frac{1}{2}\widetilde{A}_{R_n}+\frac{(-1)^k}{2}\Psi_{R_n}.
\end{align}
\end{itemize}

By (\ref{AreaSCk}), (\ref{AreaWC1}) and (\ref{AreaWC2}) we end the proof of (x).
\end{proof}

Fig. \ref{Fig2rosette} illustrates a $2$-rosette $R_2$ (on the left) and $\SC(R_2)$ (on the right). One branch of $\SC(R_2)$ is dashed.

\begin{figure}[h]
\centering
\includegraphics[scale=0.35]{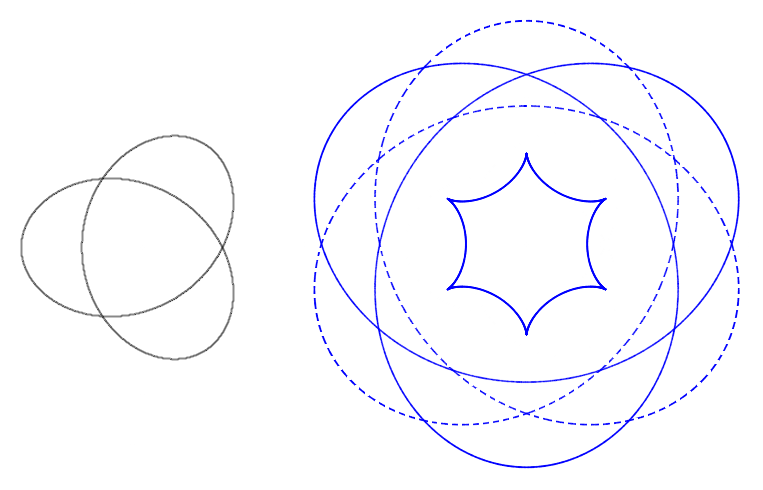}
\caption{A $2$-rosette and its secant caustic}
\label{Fig2rosette}
\end{figure}

In particular we get the following corollary of Theorem \ref{ThmRosettes} for a convex curve (let us note that if $M$ is a positively oriented oval, then $\widetilde{A}_{\Eq_{\frac{1}{2}}(M)}\leqslant 0$ \cite{Z2}).

\begin{cor}
Let $ M$ be an oval. Then $\SC( M)$ is an oval and
\begin{enumerate}[(i)]
\item
\begin{align*}
L_{\SC( M)}=2L_{ M}.
\end{align*}
\item 
\begin{align*}
A_{\SC( M)}=4A_{ M}+8|\widetilde{A}_{\Eq_{\frac{1}{2}}( M)}|.
\end{align*}
\end{enumerate}
\end{cor}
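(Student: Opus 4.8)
The plan is to recognize an oval as a $1$-rosette and then read off every assertion from Theorem \ref{ThmRosettes} specialized to $n=1$. Indeed, an oval is a simple closed convex curve, so after fixing the positive orientation it has strictly positive curvature and rotation number $1$; hence it is a generic $1$-rosette $R_1$ and all conclusions of Theorem \ref{ThmRosettes} apply with $n=1$.

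First I would establish that $\SC(M)$ is an oval. By part (i) of Theorem \ref{ThmRosettes} with $n=1$ there are exactly $2n-1=1$ branches, so $\SC(M)$ consists of the single branch $\SC_1(R_1)$ parameterized by $\theta\mapsto\gamma(\theta)-\gamma(\theta+\pi)$; in particular $\SC(M)$ is connected. Since $n=1$ is odd, part (v) tells us that this branch $\SC_1(R_1)=\SC_n(R_n)$ is a rosette, while part (vi) gives $n-1=0$ singular branches. Thus $\SC(M)$ is a non-singular $1$-rosette, i.e. an oval. (Alternatively one checks directly that the radius of curvature of this branch is $\rho(\theta)+\rho(\theta+\pi)>0$, confirming convexity and positive orientation.)

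For (i), since the unique branch is non-singular, part (viii) of Theorem \ref{ThmRosettes} yields $L_{\SC(M)}=L_{\SC_1(R_1)}=2L_{R_1}=2L_M$. For (ii) I would invoke the area formula of part (x) in the case $k=n=1$, namely
$$\widetilde{A}_{\SC_1(R_1)}+8\widetilde{A}_{E_{0.5,1}(R_1)}=4\widetilde{A}_{R_1},$$
and make the identifications $\SC_1(R_1)=\SC(M)$, $E_{0.5,1}(R_1)=E_{0.5}(M)$ and $\widetilde{A}_{R_1}=A_M$. Because $\SC(M)$ has just been shown to be a positively oriented oval, its signed area equals the area it bounds, so $A_{\SC(M)}=\widetilde{A}_{\SC(M)}=4A_M-8\widetilde{A}_{E_{0.5}(M)}$.

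The only point requiring care is the sign bookkeeping in this last step, and it is the main (essentially the only) obstacle. Using the quoted fact that $\widetilde{A}_{E_{0.5}(M)}\leqslant 0$ for a positively oriented oval \cite{Z2}, we have $-8\widetilde{A}_{E_{0.5}(M)}=8|\widetilde{A}_{E_{0.5}(M)}|$, which converts the signed-area identity into the stated formula $A_{\SC(M)}=4A_M+8|\widetilde{A}_{E_{0.5}(M)}|$. Thus the whole corollary reduces to checking that the orientation conventions are consistent, so that the signed areas $\widetilde{A}$ produced by Theorem \ref{ThmRosettes} coincide with the unsigned areas $A$ appearing in the statement; everything else is a direct substitution $n=1$ into results already proved.
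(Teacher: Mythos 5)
Your proposal is correct and follows the same route as the paper: the corollary is stated there as a direct specialization of Theorem \ref{ThmRosettes} to the case $n=1$ (one non-singular branch, length from part (viii), area from part (x) with $k=n=1$), combined with the quoted fact that $\widetilde{A}_{E_{0.5}(M)}\leqslant 0$ for a positively oriented oval to convert the signed-area identity into the absolute-value form. Your sign bookkeeping and the identification of the single branch $\SC_1(R_1)$ as a $1$-rosette, hence an oval, match the paper's intended argument.
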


We illustrate an oval $M$, $\Eq_{\frac{1}{2}}(M)$, $\SC(M)$ in Fig. \ref{FigSCconvex}.

\begin{figure}[h]
\centering
\includegraphics[scale=0.40]{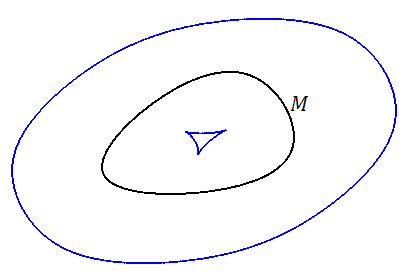}
\caption{An oval, its Wigner caustic, and its secant caustic}
\label{FigSCconvex}
\end{figure}

\bibliographystyle{amsalpha}

\end{document}